\renewcommand*{\vec}[1]{\mathbf{#1}}
\newcommand*{\NSet}{\mathbb{N}}
\newcommand*{\RSet}{\mathbb{R}}
\newcommand*{\ZSet}{\mathbb{Z}}
\newcommand*{\SQ}{\mathcal{S}}
\newcommand*{\SE}[1][3]{\mathrm{SE}(3)}
\newcommand*{\lset}[1]{\mathcal{#1}}
\newcommand*{\Norm}[1]{\Vert #1 \Vert}
\newcommand*{\identity}{\mathrm{id.}}
\newcommand*{\primal}[1]{\hat{#1}}
\newcommand*{\dual}[1]{\tilde{#1}}
\newcommand*{\dunit}{\varepsilon}
\newcommand*{\vpart}[1]{\mathfrak{v}{#1}}
\newcommand*{\conj}[1]{\bar{#1}}
\newcommand*{\econj}[1]{{#1}_\dunit}
\DeclareMathOperator*{\dist}{dist}
\DeclareMathOperator*{\discrim}{discrim}
\DeclareMathOperator*{\twist}{twist}
\newtheorem{theorem}{Theorem}
\newtheorem{lemma}[theorem]{Lemma}
\newtheorem{proposition}[theorem]{Proposition}
\newtheorem{corollary}[theorem]{Corollary}
\theoremstyle{definition}
\newtheorem{definition}[theorem]{Definition}
\theoremstyle{remark}
\newtheorem{remark}[theorem]{Remark}
\newtheorem{example}[theorem]{Example}
\begin{document}

\author{Hans-Peter Schröcker}
\title[The Bäcklund Transform of Principal Contact Element Nets]
      {The Bäcklund Transform of\\Principal Contact Element Nets}
\address{Hans-Peter Schröcker, University Innsbruck, Unit Geometry and
  CAD, Technikerstraße 20, A6020 Innsbruck, Austria}
\email{hans-peter.schroecker@uibk.ac.at}
\urladdr{http://geometrie.uibk.ac.at/schroecker/}
\keywords{Principal constant element net, Gaussian curvature,
  pseudosphere, Bäcklund transformation, Bianchi Permutation Theorem}
\subjclass[2010]{53A05, 53A17}

\begin{abstract}
We investigate geometric aspects of the the Bäcklund transform of
principal contact element nets. A Bäcklund transform exists if and
only if it the principal contact element net is of constant negative
Gaussian curvature (a pseudosphere). We describe an elementary
construction of the Bäcklund transform and prove its
correctness. Finally, we show that Bianchi's Permutation Theorem
remains valid in our discrete setting.


%
\end{abstract}
\maketitle

\section{Introduction}
\label{sec:introduction}

In the 1880s A.\;V.\;Bäcklund and L.\,Bianchi explored the
transformation of one surface of constant negative Gaussian curvature
(a \emph{pseudosphere}) into a surface of the same constant Gaussian
curvature. These surface transformations were named after Bäcklund
and, in their formulation in the language of partial differential
equations, play an important role in soliton theory and in integrable
systems. We refer the reader to the monograph
\cite{rogers02:_baecklund_darboux} for a comprehensive modern
treatment with many applications.

In 1952, W.\;Wunderlich gave a geometric description of the Bäcklund
transform of a discrete structure that is nowadays called a
K-net\,---\,a discrete asymptotic net of constant negative Gaussian
curvature \cite{wunderlich51:_differenzengeometrie}. Later, analytic
formulations were added and the discrete transformations were embedded
into the theory of Discrete Differential Geometry, see the monograph
\cite{bobenko08:_discrete_differential_geometry}.

In this article we perform a comprehensive geometric study of the
Bäcklund transform of principal contact element nets\,---\,nets of
contact elements such that any two neighbouring contact elements have
a common tangent sphere. Our main results are original but there exist
related contributions in the above mentioned publications. We also
have to mention the article \cite{schief03:_unification} by
W.\;Schief. It contains a description of the Bäcklund transform of
discrete O-surfaces. While the vertex sets of discrete O-surfaces and
principal contact element nets are identical (both are circular nets),
the normals differ. For O-surfaces, they are defined by a simple
algebraic condition, for principal contact element nets they satisfy a
geometric criterion. Accordingly, our approach is of geometric nature
as opposed to Schief's analytic treatment.

Our results are natural extensions of recent works in discrete
kinematics \cite{schroecker10:_four_positions_theory,%
  schroecker10:_discrete_gliding}. While their formulation is rather
natural at this point, some of our proofs tend to be rather involved
and require extensive calculations. We partly attribute this to the
observation that a (smooth or discrete) asymptotic parametrization is
more appropriate for the description of Bäcklund transforms. Principal
contact element nets discretize principal parametrizations and lead to
spatial structures of complicated nature.

The occasional use of a computer algebra system (CAS) is indispensable
in this work. We use Maple~13 for this purpose. In order to make the
computer calculations understandable, we try to give rather explicit
descriptions.

\section{Preliminaries and statement of main results}
\label{sec:preliminaries}

We proceed by giving definitions for the main concepts, by stating our
central results and by clarifying their relations. Proofs are deferred
to later sections.

Principal contact element nets provide a rich discrete surface
representation, consisting of points and normal vectors. They have
been introduced in \cite{bobenko07:organizing_principles} in an
attempt to develop a common master theory for circular nets (see
\cite[Section~3.5]{bobenko08:_discrete_differential_geometry}) and
conical nets (see
\cite[Section~3.4]{bobenko08:_discrete_differential_geometry} and
\cite{pottmann08:_focal_circular_conical}).

\begin{definition}
  \label{def:contact-element}
  A \emph{contact element} is a pair $(p,\vec{n})$ consisting of a
  point $a \in E^3$ (Euclidean three-space) and a unit vector $\vec{n}
  \in \RSet^3$. Its normal line $N$ is the oriented straight line
  through $a$ and in direction of $\vec{n}$, its oriented tangent
  plane $\pi$ is the plane through $p$ with oriented normal
  vector~$\vec{n}$.
\end{definition}

In this text we will always denote normal vector and normal line by
the same letter but in different fonts: Lowercase, boldface for
vectors, uppercase, italic for lines\,---\,just as in
Definition~\ref{def:contact-element}.

\begin{definition}
  \label{def:pcen}
  A \emph{contact element net} is a map $(p,\vec{n})$ from $\ZSet^d$,
  $d \ge 2$, to the space of contact elements. A \emph{principal
    contact element net} is a contact element net such that any two
  neighbouring contact elements $(p_i,\vec{n}_i)$ and
  $(p_j,\vec{n}_j)$ have a common oriented tangent sphere.
\end{definition}

The defining condition of principal contact element nets is rather
restrictive. It implies that neighbouring normal lines $N_i$ and $N_j$
intersect in a common point $h_{ij}$ which is at the same oriented
distance from both vertices $p_i$ and $p_j$. This shows that
neighbouring contact elements have a bisector plane
$\beta_{ij}$. Moreover, the vertices of an elementary quadrilateral
lie on a circle, the tangent planes are tangent to a cone of
revolution and the normal lines form a skew quadrilateral on a
hyperboloid of revolution.

An elementary quadrilateral $(p_0,\vec{n}_0)$, $(p_1,\vec{n}_1)$,
$(p_2,\vec{n}_2)$, $(p_3,\vec{n}_3)$ of a principal contact element
net can be constructed by choosing the vertices $p_0$, $p_1$, $p_2$,
and $p_3$ on a circle, prescribin $\vec{n}_0$ and then finding the
other normal vectors by reflection in the bisector planes
$\beta_{i,i+1}$ of $p_i$ and $p_{i+1}$.

In this article we investigate principal contact element nets that
admit a Bäcklund transform:

\begin{definition}
  \label{def:baecklund-transform}
  Two principal contact element nets $(p,\vec{n})$ and $(q,\vec{m})$
  are called \emph{Bäcklund mates} if
  \begin{enumerate}
  \item The distance of corresponding points $p_i$ and $q_i$ is
    constant.
  \item The angle of corresponding normals $\vec{n}_i$ and $\vec{m}_i$
    is constant. (It is necessary to measure this angle consistently
    in counter-clockwise direction when viewed along the ray from
    $p_i$ to~$q_i$.)
  \item Two corresponding tangent planes $\pi_i$ and $\chi_i$
    intersect in the connecting line $p_i \vee q_i$ of their vertices.
  \end{enumerate}
  In this case, $(q,\vec{m})$ is also called a \emph{Bäcklund
    transform} of $(p,\vec{n})$ (see
  Figure~\ref{fig:baecklund-mates}).
\end{definition}

\begin{figure}
  \centering
  \includegraphics{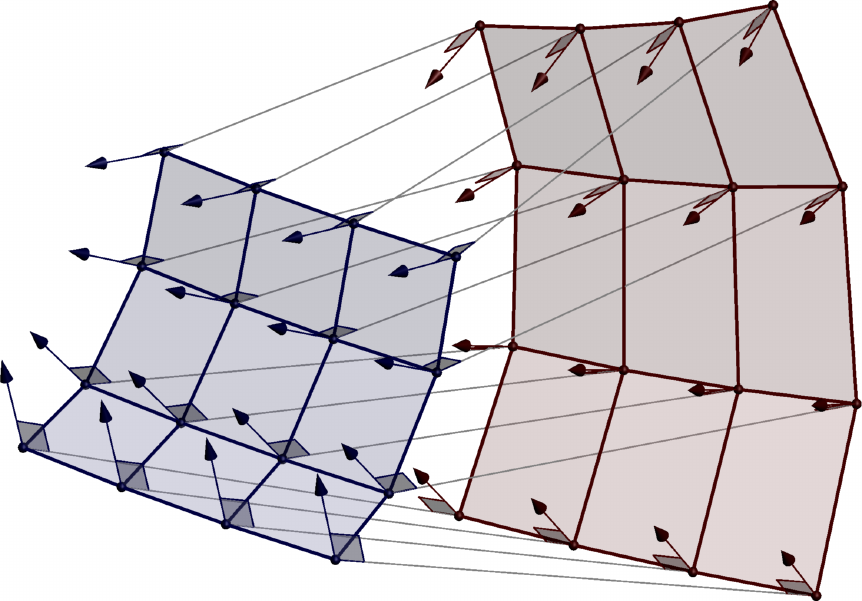}
  \caption{Bäcklund mates}
  \label{fig:baecklund-mates}
\end{figure}

The last condition in Definition~\ref{def:baecklund-transform} could
be replaced by the requirement that the connecting line $p_i \vee q_i$
is perpendicular to both normal vectors, $\vec{n}_i$ and~$\vec{m}_i$.

In the classical theory, Bäcklund transforms of smooth surfaces
$\Phi$, $\Psi$ can be defined by the same three conditions as in
Definition~\ref{def:baecklund-transform} (with the understanding that
both surfaces are parametrized over the same domain and corresponding
points, normals, and tangent planes belong to the same parameter
values). It is our aim in this article to prove several results on
Bäcklund transforms of principal contact element nets that are also
valid in the smooth theory and in other discrete settings.  The most
important property states that only pseudospheres admit Bäcklund
transforms.

\begin{theorem}
  \label{th:gaussian-curvature}
  If a principal contact element net $(p,\vec{n})$ admits a Bäcklund
  transform, it is of constant negative Gaussian curvature
  \begin{equation}
    \label{eq:1}
    K = -\frac{\sin^2\varphi}{d}
  \end{equation}
  where $d$ is the distance between corresponding points and $\varphi$
  is the angle between corresponding normals.
\end{theorem}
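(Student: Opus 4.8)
The plan is to localise the statement to a single elementary quadrilateral and then verify it by an explicit (and, as the introduction warns, somewhat lengthy) computation in a well-chosen frame. First I would fix a quadrilateral $(p_0,\vec n_0),\dots,(p_3,\vec n_3)$ of the principal contact element net together with its Bäcklund mate $(q_0,\vec m_0),\dots,(q_3,\vec m_3)$, assuming $\varphi\in(0,\pi)$ and $d>0$ (for $\varphi=0$ the ``transform'' degenerates to a translation and no curvature statement is intended). Writing $\vec e_i:=(q_i-p_i)/d$, the three conditions of Definition~\ref{def:baecklund-transform} say precisely that $\vec e_i$ is a unit vector with $\vec e_i\perp\vec n_i$ and $\vec e_i\perp\vec m_i$, that $q_i=p_i+d\,\vec e_i$, and that $\vec m_i$ arises from $\vec n_i$ by a rotation through the fixed angle $\varphi$ about the oriented axis spanned by $\vec e_i$ (the sign being fixed by the counter-clockwise convention). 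Thus the mate is the image of $(p,\vec n)$ under a vertex-wise screw-type map governed by the single connecting-line field $L_i=p_i\vee q_i$, and everything can be written through the orthonormal frame $(\vec n_i,\vec e_i,\vec n_i\times\vec e_i)$ attached to $p_i$. Using a similarity of $E^3$ I would moreover normalise the circle carrying $p_0,\dots,p_3$ to cut the number of free parameters to a minimum.

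Next I would impose that \emph{both} nets are principal. On the primal quadrilateral this means $p_0,\dots,p_3$ lie on a circle and each $\vec n_{i+1}$ is the reflection of $\vec n_i$ in the bisector plane $\beta_{i,i+1}$ of $p_i$ and $p_{i+1}$ (equivalently, neighbouring normal lines meet at equal oriented distance); the analogous relations must hold for $q_0,\dots,q_3$ and $\vec m_0,\dots,\vec m_3$. Substituting the parametrisation of the mate obtained in the first step into the principality conditions of the mate yields a closed algebraic system relating the primal edge vectors $p_j-p_i$ to the spherical (Gauss-image) edge vectors $\vec n_j-\vec n_i$ around the quadrilateral. Solving this system in closed form in terms of $d$, $\varphi$ and the remaining parameters of the circle is where I expect to need the computer algebra system, precisely because principal nets produce the spatially complicated configurations mentioned in the introduction and a (discrete) asymptotic parametrisation would be more convenient here.

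Finally I would substitute the resulting edge vectors of the face and of its spherical image into our notion of Gaussian curvature of a principal contact element net — defined, as in the relative/mixed-area theory of parallel meshes, as the ratio of the signed area of the Gauss-image quadrilateral $(\vec n_0,\dots,\vec n_3)$ to the signed area of the face $(p_0,\dots,p_3)$. A direct computation then shows that this ratio equals the constant in \eqref{eq:1}, independently of the remaining parameters, so $K$ is the same on every elementary quadrilateral and, being $-\sin^2\varphi/d<0$, negative. The main obstacle is clearly the middle step: disentangling the two systems of reflection relations simultaneously is delicate, and arranging the computation so that the parameters of the circle cancel at the end is the crux; the clean geometric picture, that $L_i$ is a discrete analogue of a pseudospherical line congruence whose focal nets are pseudospheres, only emerges once this algebra has been carried through.
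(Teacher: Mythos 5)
Your overall strategy---localise to one elementary quadrilateral, set up coordinates, and verify the area ratio of Definition~\ref{def:gaussian-curvature} by a CAS computation---is in the same computational spirit as the paper, but the route through the middle is genuinely different, and the difference matters. The paper first proves Lemma~\ref{lem:rotation}: each pair of corresponding contact elements $(p_i,\vec{n}_i)$, $(q_i,\vec{m}_i)$ is a rigid figure, and neighbouring figures correspond in a \emph{rotation} about $L_{ij}=h_{ij}\vee k_{ij}$. The four relative rotations of an elementary quadrilateral compose to the identity, so the whole configuration is a rotation quadrilateral whose relative axes meet the two transversals $N$ and $M$; Theorem~\ref{th:rotation-quadrilateral} then asserts the curvature depends only on the relative position of $N$ and $M$, which is the same for every face, and the verification becomes a dual-quaternion computation in which all constraints except the Study condition are \emph{linear}, finished by an ideal-membership test. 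Your plan skips this kinematic reduction and instead imposes the principality conditions on the mate directly on top of the vertex-wise parametrisation $q_i=p_i+d\,\vec{e}_i$, $\vec{m}_i=R_{\vec{e}_i,\varphi}(\vec{n}_i)$. That is a legitimate formulation, but two things need repair. First, the resulting system (concircularity of the $q_i$ plus the reflection relations for the $\vec{m}_i$ around all four edges) is heavily overdetermined in the four remaining unknowns $\vec{e}_0,\dots,\vec{e}_3$, so ``solving it in closed form'' is not the right logical move: existence of the mate is itself a constraint on the primal net (that is the content of the theorem), and what you actually must show is that the polynomial $S_0\,d^2+\sin^2\varphi\,S$ lies in the ideal generated by the constraint equations---an elimination, not a solution. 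Second, without the rotation-quadrilateral reduction the system is nonlinear from the outset, and there is no reason to expect the Gr\"obner-basis computation to terminate in practice; the paper reports that even its linearised version takes hours unless the ideal-theoretic shortcut is used. So the approach could in principle be completed, but the missing idea is precisely the observation that corresponding pairs are related by rotations, which both linearises the computation and immediately gives constancy of $K$ across the net.
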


We still have to explain the notion of Gaussian curvature to which
this theorem refers.  In the smooth setting, the Gaussian curvature in
a point can be defined as the local area distortion under the Gauss
map. This is imitated in

\begin{definition}
  \label{def:gaussian-curvature}
  The Gaussian curvature $K$ of an elementary quadrilateral
  $(p_0,\vec{n}_0)$, $(p_1,\vec{n}_1)$, $(p_2,\vec{n}_2)$,
  $(p_3,\vec{n}_3)$ of a principal contact element net is defined as
  \begin{equation}
    \label{eq:2}
    K = \frac{S_0}{S}
  \end{equation}
  where $S_0$ is the oriented area of the spherical quadrilateral
  $\vec{n}_0$, $\vec{n}_1$, $\vec{n}_2$, $\vec{n}_3$ and $S$ is the
  oriented area of the circular quadrilateral $p_0$, $p_1$,
  $p_2$,~$p_3$.
\end{definition}

The Gaussian curvature as defined here is a well-accepted concept in
discrete differential geometry (see
\cite{bobenko08:_discrete_differential_geometry}) and it is a special
case of a recently published and more general theory
\cite{bobenko10:_curvature_theory}.  Note that in contrast to the
smooth setting, the Gaussian curvature is assigned to a face and not
to a vertex. This allows us to speak of the Gaussian curvature of an
elementary quadrilateral.

It will be convenient to speak of the twist of two oriented lines and
also of the twist of Bäcklund mates.

\begin{definition}
  \label{def:twist}
  The \emph{twist} of two oriented lines $N$ and $M$ is defined as the
  ratio
  \begin{equation}
    \label{eq:3}
    \twist(N,M) = \frac{\sin\varphi}{d}
  \end{equation}
  where $\varphi$ is the angle between the respective direction
  vectors $\vec{n}$ and $\vec{m}$ (measured according to the
  convention of Definition~\ref{def:baecklund-transform}) and $d$ is
  the distance between $N$ and $D$
  (Figure~\ref{fig:transversals}). The twist of two \emph{Bäcklund
    mates} is defined as the twist of any two corresponding normal
  lines.
\end{definition}

\begin{figure}
  \centering
  \includegraphics{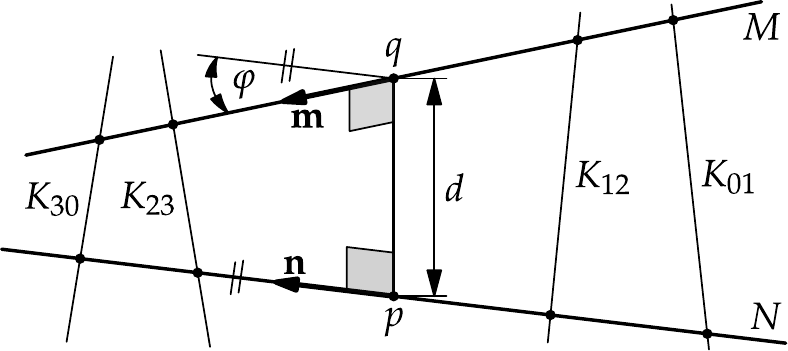}
  \caption{Transversals of four relative revolute axes}
  \label{fig:transversals}
\end{figure}

We will argue that Theorem~\ref{th:gaussian-curvature} is a
consequence of the Theorem~\ref{th:rotation-quadrilateral}, below,
which deserves interest in its own. In order to formulate this result,
we recall a definition from
\cite{schroecker10:_four_positions_theory}:

\begin{definition}
  \label{def:rotation-quadrilateral}
  A cyclic sequence $A_0$, $A_1$, $A_2$, $A_3$ of direct Euclidean
  displacements $A_i\colon E^3 \to E^3$ is called a \emph{rotation
    quadrilateral,} if any two neighboring positions correspond in a
  rotation.
\end{definition}

A generic rotation quadrilateral has four relative revolute axes
$K_{01}$, $K_{12}$, $K_{23}$, $K_{30}$ (in the moving space) that
admit two transversal lines $N$ and $M$. Denote the feet of their
common perpendicular by $p \in N$ and $q \in M$ and one of their unit
direction vectors by $\vec{n}$ and $\vec{m}$ (thus imposing an
orientation on $N$ and $M$). It was shown in
\cite{schroecker10:_four_positions_theory} that precisely the
homologous images of the two contact elements $(p,\vec{n})$ and
$(q,\vec{m})$ can serve as corresponding elementary quadrilaterals of
Bäcklund mates. This leads us to an important result that relates the
Bäcklund transform of pseudospherical principal contact element nets
to discrete rotating motions.

\begin{theorem}
  \label{th:rotation-quadrilateral}
  Consider a rotation quadrilateral $A_0$, $A_1$, $A_2$, $A_3$ with
  relative revolute axes $K_{01}$, $K_{12}$, $K_{23}$, $K_{30}$ in the
  moving space. Denote by $N$ and $M$ their transversals, by $\vec{n}$
  and $\vec{m}$ one of their respective unit direction vectors and by
  $p \in N$ and $q \in M$ the feet of their common normals
  (Figure~\ref{fig:transversals}). Then the Gaussian curvature of the
  homologous images of $(p,\vec{n})$ and $(q,\vec{m})$ equals
  $-\twist^2(N,M)$, that is, it can be computed by \eqref{eq:1} where
  $d = \dist(p,q)$ and $\varphi = \sphericalangle(\vec{n},\vec{m})$.
\end{theorem}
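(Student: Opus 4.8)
The plan is to set up explicit coordinates adapted to the common-normal configuration of the two transversals $N$ and $M$, compute the four homologous images of the contact elements $(p,\vec n)$ and $(q,\vec m)$ under the displacements $A_0,\dots,A_3$, and then evaluate the quotient $K = S_0/S$ of Definition~\ref{def:gaussian-curvature} directly. First I would place the common perpendicular of $N$ and $M$ along a coordinate axis, so that $p$ and $q$ are at oriented distance $d$ and the direction vectors $\vec n$, $\vec m$ enclose the angle $\varphi$; in these coordinates the rotation quadrilateral is parametrized by the four rotation angles about $K_{01}, K_{12}, K_{23}, K_{30}$, subject to the single closure condition $A_3^{-1}A_2 A_1^{-1} A_0 = \identity$ (equivalently the product of the four relative rotations is the identity). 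Because each $K_{i,i+1}$ meets both $N$ and $M$ by hypothesis, each relative rotation fixes one point of $N$ and one point of $M$, which is exactly the condition that neighbouring contact elements $(p_i,\vec n_i)$, $(p_{i+1},\vec n_{i+1})$ share a tangent sphere — so the homologous images do form an elementary quadrilateral of a principal contact element net, and $S$, $S_0$ are well defined.

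Next I would compute $S$ and $S_0$ as functions of the rotation parameters. For $S$, the four image points $p_i = A_i(p)$ lie on a circle (a standard fact about four positions of a rotating motion, or directly: they are the $A_i$-images of a single point, and any three positions determine a rotation axis, forcing concyclicity); its oriented area can be written via the cross-ratio of the four points or, more cheaply, as a sum of signed triangle areas $\frac12\sum (p_i - p_0)\times(p_{i+1}-p_0)$. Similarly $S_0$ is the oriented spherical area of $\vec n_i = A_i \vec n$ on the unit sphere, computable from the spherical excess or again from a signed sum of spherical triangle areas. Both quantities will come out as rational (resp. trigonometric-rational) expressions in the half-angle tangents of the rotation angles, with the closure relation eliminating one of them; this is where the CAS the author mentions would be used. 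The claim to verify is then the identity
\begin{equation*}
  \frac{S_0}{S} \;=\; -\,\twist^2(N,M) \;=\; -\,\frac{\sin^2\varphi}{d^2},
\end{equation*}
i.e. that all the rotation-angle dependence cancels and only the fixed geometric data $d = \dist(p,q)$ and $\varphi = \sphericalangle(\vec n,\vec m)$ survive.

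The main obstacle I expect is precisely this cancellation: a priori $S$ and $S_0$ depend on three free rotation parameters, and showing their ratio is a constant depending only on the relative position of $N$ and $M$ requires either a clever choice of coordinates that exposes the structure, or brute-force simplification. A cleaner route — which I would try first — is to avoid computing $S$ and $S_0$ in full and instead argue infinitesimally: fix three of the positions and let the fourth vary, show $dS_0/dS$ is independent of the variation and equals $-\twist^2(N,M)$, then integrate (using that the degenerate quadrilateral, where two positions coincide, contributes $S = S_0 = 0$). This reduces the problem to a first-order computation along each of the four revolute axes, where the contact-element picture is rigid and the ratio of the area element on the Gauss sphere to the area element in the base circle is governed exactly by the twist of the two transversals. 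Finally, once Theorem~\ref{th:rotation-quadrilateral} is established, Theorem~\ref{th:gaussian-curvature} follows as indicated in the text: a Bäcklund mate supplies, at each quadrilateral, a rotation quadrilateral whose relative revolute axes are the bisector-plane intersections, with transversals the two normal-line families, so $K = -\twist^2 = -\sin^2\varphi/d$ with $d$ the (constant) squared... rather, with $d$ the constant point distance and $\varphi$ the constant normal angle, giving \eqref{eq:1}.
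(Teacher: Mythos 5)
Your plan follows the same overall route as the paper's proof: adapted coordinates for the common-normal configuration of $N$ and $M$, explicit computation of the homologous images, and a CAS verification that $S_0/S=-\sin^2\varphi/d^2$. But the decisive step is left entirely to the machine, and the configuration space you propose to feed it is set up incorrectly. You parametrize the rotation quadrilateral ``by the four rotation angles about $K_{01},K_{12},K_{23},K_{30}$, subject to the single closure condition''. The closure $R_{30}\circ R_{23}\circ R_{12}\circ R_{01}=\identity$ is six scalar conditions in $\SE$, not one, and the axes $K_{i,i+1}$ are themselves unknowns (each is only constrained to meet both $N$ and $M$, a two-parameter family of lines). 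Four angles over fixed axes generically admit no closed quadrilateral at all, so the family you would be verifying the identity on is either empty or far too small to prove the theorem in its stated generality. The actual configuration space is six-dimensional; the paper handles this by writing $A_1$ and $A_3$ as three-parameter families (Equation~\eqref{eq:12}), determining $A_2$ as the second intersection of a line with the Study quadric via seven equations, and then---the key trick that makes the computation feasible---never solving for $A_2$ explicitly but instead showing that the polynomial $(1+t^2)e^2S_0-t^2S$ of \eqref{eq:16} lies in the ideal generated by those seven equations. Some such device (or an equivalent six-parameter setup) is needed; without it the ``verification'' does not establish the claim.

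Two further points. First, the quantity $S_0$ in Definition~\ref{def:gaussian-curvature} is, as implemented in \eqref{eq:14}, the flat oriented area of the quadrilateral with vertices $\vec{n}_0,\dots,\vec{n}_3$ (these are concyclic on the sphere, hence coplanar, and the area is computed by orthographic projection); replacing it by the spherical excess, as you suggest, computes a different quantity and would not reproduce \eqref{eq:1}. Second, your proposed ``cleaner route''---fix three positions, vary the fourth, show $dS_0=-\twist^2(N,M)\,dS$, and integrate from a degenerate quadrilateral---is only a hope as stated: you would have to show that the six-dimensional family of rotation quadrilaterals over fixed transversals is connected to a degenerate one by such deformations and establish the first-order identity along them, which is essentially the original claim in disguise. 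By contrast, your closing reduction of Theorem~\ref{th:gaussian-curvature} to Theorem~\ref{th:rotation-quadrilateral} (via Lemma~\ref{lem:rotation} and the constancy of $d$ and $\varphi$ across all elementary quadrilaterals) is correct and matches the paper.
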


As a consequence of Theorem~\ref{th:rotation-quadrilateral} a
pseudospherical principal contact element net of Gaussian curvature
$K$ admits Bäcklund transforms of twist $\pm\sqrt{-K}$.

In retrospect and especially when studying its relation to
Theorem~\ref{th:gaussian-curvature}, the content of
Theorem~\ref{th:rotation-quadrilateral} is maybe a rather obvious
conjecture. Nonetheless, it is surprising when viewed
independently. There are six degrees of freedom when fitting a
rotation quadrilateral to two given lines $N$ and $M$: Three choices
for every relative rotation makes a total of twelve degrees of
freedom; the requirement that their composition yields the identity
consumes six of them. Still, this is insufficient to change the area
ratio \eqref{eq:2}.

Our proof of Theorem~\ref{th:rotation-quadrilateral} uses the dual
quaternion calculus of rigid motions. It allows a computational proof
that requires only linear operations. Nonetheless, the involved
calculations are so complicated that assistance of a CAS seems
indispensable. The reason for this is the rather involved spatial
configuration and the high number of degrees of
freedom. Theorems~\ref{th:gaussian-curvature} and
\ref{th:rotation-quadrilateral} will both be proved in
Section~\ref{sec:gaussian-curvature}.

A certain converse of Theorem~\ref{th:gaussian-curvature} is the
construction of the Bäcklund transform from a given pseudospherical
principal contact element net. It is on the agenda in
Section~\ref{sec:construction}.

\begin{theorem}
  \label{th:baecklund-construction}
  Given a pseudospherical principal contact element net $(p,\vec{n})$
  of negative Gaussian curvature $K$ and a contact element
  $(q_i,\vec{m}_i)$ such that
  \begin{itemize}
  \item $q_i$ lies in the tangent plane $\pi_i$ of $p_i$,
  \item $\vec{m}_i$ is perpendicular to the line $p_i \vee q_i$, and
  \item the quantities $K$, $d = \dist(p_i,q_i)$ and $\varphi =
    \sphericalangle(\vec{n}_i,\vec{m}_i)$ satisfy \eqref{eq:1}
  \end{itemize}
  there exists precisely one Bäcklund transform $(q,\vec{m})$ of
  $(p,\vec{n})$ such that $(p_i,\vec{n}_i)$ and $(q_i,\vec{m}_i)$
  correspond.
\end{theorem}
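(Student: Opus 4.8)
The natural strategy is a propagation argument: starting from the prescribed contact element $(q_i,\vec{m}_i)$, I would show that along each edge of $\ZSet^d$ the Bäcklund conditions of Definition~\ref{def:baecklund-transform} determine the neighbouring contact element uniquely, and then verify that the resulting extension is consistent around every elementary quadrilateral. Since $\ZSet^d$ is generated by its edges and the closed elementary squares, existence and uniqueness of the global net reduces to these two local statements.

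\emph{Edge step.} Fix an edge, say from $p_0$ to $p_1$, with $(q_0,\vec{m}_0)$ already known. I must produce $(q_1,\vec{m}_1)$ so that $(q_0,\vec{m}_0)$, $(q_1,\vec{m}_1)$ is a principal pair, $\dist(p_1,q_1)=d$, $\sphericalangle(\vec{n}_1,\vec{m}_1)=\varphi$, $q_1\in\pi_1$, and $p_1\vee q_1\perp\vec{m}_1$. Here I would invoke Theorem~\ref{th:rotation-quadrilateral}, or rather its kinematic mechanism: the Bäcklund relation along an edge is realized by the unique rotation that carries the contact element $(p_0,\vec{n}_0)$ to $(p_1,\vec{n}_1)$ — its axis is a line of the hyperboloid through the two normals, meeting both at equal oriented distance — and the \emph{same} rotation carries $(q_0,\vec{m}_0)$ to the sought $(q_1,\vec{m}_1)$. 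One checks that this rotation exists and is unique because $(p_0,\vec{n}_0)$ and $(p_1,\vec{n}_1)$ are neighbours in a principal contact element net (they share a tangent sphere, hence their normals meet and their bisector plane is defined), and that the image contact element automatically satisfies all five Bäcklund conditions along the edge because rotations preserve distances, angles, and the incidence $q\in\pi\Leftrightarrow p\in\chi$. This gives a well-defined candidate net $(q,\vec{m})$ on the $1$-skeleton.

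\emph{Face step (the main obstacle).} The real work is to show that going around an elementary quadrilateral $(p_0,p_1,p_2,p_3)$ returns the same contact element, i.e.\ that the composition of the four edge-rotations $R_{01},R_{12},R_{23},R_{30}$ — which is the identity on the $(p,\vec{n})$ net by hypothesis — also closes up on the $(q,\vec{m})$ side. Equivalently: the four edge-rotations of the $p$-net, viewed as relative displacements, form a rotation quadrilateral whose two transversals are precisely the lines $N_\bullet$ carrying $p_i$-normals and $M_\bullet$ carrying $q_i$-normals; by the characterization recalled before Theorem~\ref{th:rotation-quadrilateral} (from \cite{schroecker10:_four_positions_theory}), a rotation quadrilateral is determined by its two transversal contact elements together with the twist, and the twist is pinned down by \eqref{eq:1} and the pseudospherical hypothesis $K = -\sin^2\varphi/d$. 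So I would argue: the given $(p,\vec{n})$-quad together with the prescribed twist $\sqrt{-K}$ determines a rotation quadrilateral; its second transversal contact element is some $(q',\vec{m}')$ with $\dist(p,q')=d$ and the correct angle; at the vertex $i$ this must coincide with $(q_i,\vec{m}_i)$ because of the uniqueness built into that characterization; hence the four edge-rotations I constructed are exactly that rotation quadrilateral, whence their composition is the identity and the $q$-net closes. Consistency of the principal-net property for $(q,\vec{m})$ then follows because each edge pair is a rotated copy of a principal edge pair. Finally, uniqueness of the whole Bäcklund transform is immediate from the uniqueness in the edge step, once the starting element is fixed.

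The delicate point throughout is bookkeeping of \emph{orientations} — the sign of the angle $\varphi$ and of the twist, and the consistent choice of direction vectors $\vec{m}_i$ when propagating — since Definition~\ref{def:baecklund-transform} insists on a counter-clockwise convention viewed along $p_i\vee q_i$. I expect the face step to require either the dual-quaternion computation already used for Theorem~\ref{th:rotation-quadrilateral} or a careful appeal to the results of \cite{schroecker10:_four_positions_theory}; in either case the geometric content is that six of the twelve parameters of a rotation quadrilateral are consumed by the closure condition and the remaining six are exactly a transversal contact element plus the twist, leaving no freedom once $(p,\vec{n})$, the twist, and $(q_i,\vec{m}_i)$ are fixed.
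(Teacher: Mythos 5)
Your edge step is essentially the paper's ``neighbour construction'', and its uniqueness conclusion is correct, but the mechanism you describe is off: there is \emph{no} unique rotation carrying $(p_0,\vec{n}_0)$ to $(p_1,\vec{n}_1)$. By Proposition~\ref{prop:pcen} a neighbouring pair of a principal contact element net corresponds in a whole pencil of rotations, with axes through $h_{01}=N_0\cap N_1$ lying in the bisector plane $\beta_{01}$. The rotation that propagates $(q_0,\vec{m}_0)$ is singled out by the additional requirement that its axis pass through $k_{01}=M_0\cap\beta_{01}$, so it depends on $M_0$ and not only on the $p$-data. This is a repairable slip.

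The face step, however, contains the real gap. You invoke a ``uniqueness built into'' the characterization of rotation quadrilaterals, namely that a rotation quadrilateral is determined by one transversal contact element together with the twist. No such statement is available, and its uniqueness half is false: as the paper remarks after Theorem~\ref{th:rotation-quadrilateral}, there is a \emph{six-parameter} family of rotation quadrilaterals fitting two prescribed transversal lines $N$ and $M$. Moreover, the result quoted from \cite{schroecker10:_four_positions_theory} runs in the opposite direction---from a given rotation quadrilateral to the candidate Bäcklund quadrilaterals---whereas you need to pass from a given pseudospherical quadrilateral plus a prescribed $(q_i,\vec{m}_i)$ to a rotation quadrilateral, i.e.\ to the closure $R_{30}\circ R_{23}\circ R_{12}\circ R_{01}=\identity$ on the $q$-side. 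That closure is precisely the content of the theorem and cannot be obtained by dimension counting. The paper proves it by explicit computation: Lemma~\ref{lem:projectivity} shows that the return map $\eta$ of \eqref{eq:25} acts projectively on the set of lines obtained by rotating $M_0$ about $N_0$ (so one nontrivial fixed line forces $\eta$ to be the identity there, which supplies the normalization that lets the prescribed point $q_i$ sit anywhere on its circle in $\pi_i$), and a CAS calculation then shows that the closure condition on the pencil of admissible lines $M_0$ through $q_0$, after splitting off spurious factors, reduces to the quadratic $G_2(\lambda)=0$ of \eqref{eq:38}; its two roots must, by Theorem~\ref{th:gaussian-curvature}, be exactly the two lines satisfying \eqref{eq:1}. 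Your outline offers no substitute for this computation, so as it stands the existence half of the theorem is not established.
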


As we will see, there exists a simple necessary construction for the
neighbours of the contact element $(q_i,\vec{m}_i)$. This already
implies uniqueness of the Bäcklund transform. In order to prove
existence, we have to show that the conditions on $q_i$ and
$\vec{m}_i$ imply that this construction does not lead to
contradictions. The proof consists once more of a direct CAS-aided
computation.

One interpretation of Theorem~\ref{th:baecklund-construction} is that
every pseudospherical principal contact element net can be generated
in infinitely many ways as trajectory of a discrete rotating motion
that has a second, non-parallel trajectory surface of the same type.
Combining this observation with results of
\cite{schroecker10:_discrete_gliding}, in particular with
\cite[Theorem~7]{schroecker10:_discrete_gliding}, we get an
interesting corollary (the terminology is explained in the standard
reference \cite{bobenko08:_discrete_differential_geometry} on discrete
differential geometry):

\begin{corollary}
  \label{cor:consistency}
  Principal contact element nets of a prescribed negative Gaussian
  curvature are described by a multidimensionally consistent
  $2D$-system.
\end{corollary}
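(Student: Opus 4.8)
The plan is to recognize that Theorem~\ref{th:baecklund-construction} is precisely the ``initial value'' statement one needs to feed into the existing theory of multidimensionally consistent systems, and then to combine it with \cite[Theorem~7]{schroecker10:_discrete_gliding}. Concretely, the objects on $\ZSet^d$ are contact elements subject to the constraint (a) that neighbours share a tangent sphere (the principal condition) and (b) that every elementary quadrilateral has the prescribed Gaussian curvature $K$. I would first argue that this is a $2D$-system in the sense of \cite{bobenko08:_discrete_differential_geometry}: the allowed data on an elementary square are not free but are governed by an equation relating the four contact elements, and Theorem~\ref{th:baecklund-construction} (read along a single coordinate direction as a Bäcklund shift) shows that three contact elements around a square, together with the constraints, determine the fourth. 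So the combinatorial pattern of initial data and the well-posedness on a single quad are exactly those of a $2D$-system.

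Next I would address multidimensional consistency, which is the statement that on an elementary cube the three ways of computing the eighth contact element from a consistent set of initial data agree. Here is where the kinematic reinterpretation of Theorem~\ref{th:baecklund-construction} does the work: a shift in the $i$-th lattice direction of a pseudospherical principal contact element net is, up to the identification spelled out after Theorem~\ref{th:rotation-quadrilateral}, the same as performing a Bäcklund transformation of twist $\sqrt{-K}$, i.e.\ passing to a second trajectory surface of a discrete rotating motion. Two different directions give two Bäcklund transforms, and the cube-consistency we must verify is exactly Bianchi's Permutation Theorem (the final result announced in the abstract): the two Bäcklund transforms in directions $i$ and $j$ have a common further Bäcklund transform, computed symmetrically. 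Thus the closing of the cube reduces to the Permutation Theorem together with the compatibility of the rotating-motion picture, which is where \cite[Theorem~7]{schroecker10:_discrete_gliding} enters — it supplies the consistency of superposing the discrete rotating motions that realize the successive transforms.

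Assembling these pieces: by Theorem~\ref{th:gaussian-curvature} a pseudosphere is the only thing that can occur, by Theorem~\ref{th:baecklund-construction} each coordinate shift is a well-defined Bäcklund step with a free two-parameter choice of initial contact element subject to~\eqref{eq:1}, and by the Permutation Theorem those steps commute on the cube. Hence the system is a multidimensionally consistent $2D$-system, which is the assertion of the corollary. I expect the main obstacle to be bookkeeping rather than genuine difficulty: one must check that the identification between ``shift in a lattice direction of a $K$-pseudosphere'' and ``Bäcklund transform of the appropriate twist'' is performed consistently across all directions of the cube, so that the permutation-theorem diagram one draws really is the cube-consistency diagram of the $2D$-system, with the orientations and twist signs tracked correctly throughout.
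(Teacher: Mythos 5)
The paper does not really prove this corollary at all: it obtains it in one line by combining the kinematic reading of Theorem~\ref{th:baecklund-construction} (every pseudospherical principal contact element net arises as a trajectory surface of a discrete rotating motion possessing a second, non-parallel trajectory surface of the same type) with the external consistency result \cite[Theorem~7]{schroecker10:_discrete_gliding}, and then remarks that ``simple direct proofs'' exist without supplying one. Your proposal attempts such a direct proof. Its first half is essentially sound: with $K$ prescribed, three contact elements of an elementary quadrilateral determine the fourth up to finitely many choices (concircularity fixes the circle, reflection in the bisector planes propagates the normals, and the curvature equation \eqref{eq:2} pins down the remaining parameter on the circle), so one does have a $2D$-system. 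A minor quibble is that this step is elementary and is not really what Theorem~\ref{th:baecklund-construction} says; that theorem produces a Bäcklund mate from a single seed contact element, not the fourth vertex of a principal quadrilateral from the other three.

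The genuine gap is in your treatment of cube consistency. The cube one must close for the $2D$-system of the corollary has all twelve edges of \emph{principal} type (neighbouring contact elements share an oriented tangent sphere, Definition~\ref{def:pcen}) and all six faces governed by the same curvature-$K$ equation. Bianchi's Permutation Theorem~\ref{th:bianchi-permutation} concerns a different incidence figure: there the edges joining $(a_i,\vec{k}_i)$ to $(b_i,\vec{l}_i)$, etc., are Bäcklund edges in the sense of Definition~\ref{def:baecklund-transform}, and corresponding contact elements of Bäcklund mates do \emph{not} share a tangent sphere --- their normal lines are skew with nonzero twist. So the permutation diagram is not the consistency cube of the system, and your identification ``cube-consistency $=$ Permutation Theorem'' would need an argument that is not supplied and that I do not see how to make; in the standard framework permutability is a \emph{consequence} of multidimensional consistency when the transformation satisfies the same face equation, which is precisely not the case here. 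There is also a dependency problem: Theorem~\ref{th:bianchi-permutation} is established only in Section~\ref{sec:permutation} by a separate Bennett-linkage argument and is nowhere presented as equivalent to the corollary. To stay close to the paper, the consistency should instead be imported from \cite[Theorem~7]{schroecker10:_discrete_gliding} via the rotating-motion encoding (which you mention, but only in a supporting role); otherwise a direct verification of cube closure for the principal system is required.
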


This corollary certainly admits simple direct proofs. Still, we are
not aware of a reference that actually states it.

We should also mention that our results are in perfect analogy to the
smooth setting. If $\Phi$ and $\Psi$ are Bäcklund mates, the
correspondence between their points can be realized by principal
parametrizations $\vec{x}(u,v)$ and $\vec{y}(u,v)$. For every
parameter pair $(u,v)$, we obtain a rigid figure consisting of
corresponding points $\vec{x}(u,v)$, $\vec{y}(u,v)$ and the respective
surface normals. This induces a two-parametric motion $A(u,v)$ with
the following properties:
\begin{itemize}
\item $A(u,v)$ is a gliding motion on both surfaces $\Phi$ and $\Psi$
  \cite[Section~7.1.5]{pottmann01:_line_geometry}. This means that
  there exist two planes in the moving space whose images under
  $A(u,v)$ envelop $\Phi$ and $\Psi$, respectively.
\item The infinitesimal motions in $u$- and $v$-parameter directions
  are infinitesimal rotations. (This is a general property of gliding
  motions along principally parametrized surfaces.)
\end{itemize}
In our discrete setting, both properties are preserved. The
infinitesimal rotations of the continuous case are replaced by
rotations between finitely separated positions.

Our final result, which will be presented in more detail in
Section~\ref{sec:permutation}, is Bianchi's Permutation Theorem for
Bäcklund transforms of pseudospherical principal contact element nets:

\begin{theorem}
  \label{th:bianchi-permutation}
  Suppose $(b,\vec{l})$ and $(c,\vec{m})$ are two Bäcklund transforms
  of the same twist to a principal contact element net
  $(a,\vec{k})$. Then there exists a unique principal contact element
  net $(d,\vec{n})$ which is at the same time a Bäcklund transform of
  $(b,\vec{l})$ and~$(c,\vec{m})$.
\end{theorem}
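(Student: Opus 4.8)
The plan is to reduce Bianchi's Permutation Theorem to a single elementary quadrilateral and then invoke the two facts the paper has already placed at our disposal: the explicit construction of a Bäcklund transform from one prescribed contact element (Theorem~\ref{th:baecklund-construction}) and the kinematic interpretation via rotation quadrilaterals (Theorem~\ref{th:rotation-quadrilateral}). Fix an index $i$ and work with the four contact elements $(a_i,\vec{k}_i)$, $(b_i,\vec{l}_i)$, $(c_i,\vec{m}_i)$. First I would argue that the candidate fourth contact element $(d_i,\vec{n}_i)$ is forced: the three conditions in Definition~\ref{def:baecklund-transform} that $(d_i,\vec{n}_i)$ must satisfy relative to both $(b_i,\vec{l}_i)$ and $(c_i,\vec{m}_i)$ (prescribed distances, prescribed angles measured with the correct orientation, and the incidence of tangent planes in the connecting lines) are, generically, enough to pin down $d_i$ and $\vec{n}_i$ uniquely. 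Concretely, $d_i$ lies in the tangent plane $\chi_i$ of $b_i$ at oriented distance equal to the $(a,c)$-twist datum, and simultaneously in the tangent plane of $c_i$; intersecting these two planes and imposing the angle conditions isolates the point and the normal. This is the classical Bianchi construction and I would present it as the definition of $(d_i,\vec{n}_i)$, noting the one genuinely new point: one must check that the twist of $(d,\vec{n})$ over $(b,\vec{l})$ equals the original twist, which follows because $K$ is the same for the net $(b,\vec{l})$ (it is a Bäcklund transform of a pseudosphere, hence a pseudosphere of the same curvature by Theorem~\ref{th:gaussian-curvature}) and because the twist is $\sqrt{-K}$.

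Next I would deal with propagation, i.e.\ why a single consistently chosen $(d_i,\vec{n}_i)$ extends to a principal contact element net that is a Bäcklund mate of both $(b,\vec{l})$ and $(c,\vec{m})$. Here Theorem~\ref{th:baecklund-construction} does the heavy lifting: applied to the pseudosphere $(b,\vec{l})$ with the seed element $(d_i,\vec{n}_i)$ (which satisfies exactly the hypotheses of that theorem, since $d_i$ lies in the tangent plane of $b_i$, $\vec{n}_i \perp (b_i \vee d_i)$, and the curvature/distance/angle relation \eqref{eq:1} holds by construction of the twist), it yields a \emph{unique} Bäcklund transform $(d,\vec{n})$ of $(b,\vec{l})$ through that seed. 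Symmetrically, applied to $(c,\vec{m})$ with the \emph{same} seed, it yields a unique Bäcklund transform $(d',\vec{n}')$ of $(c,\vec{m})$. The theorem is proved once we show $(d,\vec{n}) = (d',\vec{n}')$. Because both nets agree at the index $i$ and both constructions are local (each new contact element is determined from its already-constructed neighbour together with the net it is a transform of), it suffices to prove the equality on one elementary quadrilateral: if $(a_0,\vec{k}_0),\dots,(a_3,\vec{k}_3)$ is a face and the three Bäcklund images $(b_j,\vec{l}_j)$, $(c_j,\vec{m}_j)$, and $(d_0,\vec{n}_0)$ are given, then the remaining $(d_1,\vec{n}_1),(d_2,\vec{n}_2),(d_3,\vec{n}_3)$ computed along the $b$-route coincide with those computed along the $c$-route.

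To settle that face-level coincidence I would pass to kinematics. Each Bäcklund mate gives, as the paper explains in the smooth paragraph and as Theorem~\ref{th:rotation-quadrilateral} makes precise discretely, a rotation quadrilateral: the four rigid displacements carrying the reference figure (a pair of contact elements joined by their common-perpendicular segment) onto the four vertices of the face. Passing from $(a,\vec{k})$ to $(b,\vec{l})$ is a constant relative displacement $B$, from $(a,\vec{k})$ to $(c,\vec{m})$ a constant relative displacement $C$; the desired net $(d,\vec{n})$ should correspond to the relative displacement that completes the ``Bianchi cube''. The claim $d$-via-$b$ $=$ $d$-via-$c$ is then exactly the statement that the composed displacement $B^{-1}(\text{something})C$ is path-independent, i.e.\ that a certain hexahedron of Euclidean motions closes up. I would translate this into the dual quaternion calculus already used for Theorem~\ref{th:rotation-quadrilateral}, where each relative rotation is a dual quaternion and the closure condition becomes a polynomial identity in the rotation parameters subject to the constraints that (i) the three given rotation quadrilaterals close and (ii) the twists of the two seed transforms are equal. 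I expect this closure identity to be the main obstacle: it is the only place where the special value of the twist is genuinely needed, and, in the spirit of the rest of the paper, verifying it will require a direct CAS-assisted computation rather than a short conceptual argument. Once the identity is confirmed, uniqueness of $(d,\vec{n})$ is immediate from the uniqueness clauses of Theorems~\ref{th:baecklund-construction} and the forced choice of the seed $(d_i,\vec{n}_i)$, completing the proof.
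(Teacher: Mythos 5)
Your overall strategy---fix a seed $(d_i,\vec{n}_i)$, propagate it along $(b,\vec{l})$ and along $(c,\vec{m})$ via Theorem~\ref{th:baecklund-construction}, and show the two propagations agree---is a genuinely different decomposition from the paper's, which instead constructs every $(d_j,\vec{n}_j)$ \emph{pointwise} from the triple $(a_j,\vec{k}_j)$, $(b_j,\vec{l}_j)$, $(c_j,\vec{m}_j)$ (so no propagation and hence no route-independence issue ever arises) and then proves that the resulting contact element net is principal. But as written your proof has a genuine gap at exactly the point you flag yourself: the face-level coincidence of the $b$-route and the $c$-route is the entire content of the permutability statement, and you neither carry out the ``Bianchi cube'' closure computation nor give any argument for why it should hold---you only predict that a CAS would confirm it. The paper replaces this missing step by a conceptual argument you do not reproduce: the corresponding normal lines $K_j$, $L_j$, $N_j$, $M_j$ form the axes of a Bennett linkage, whose one-parameter family of congruent realizations lets one pass from the homologous image $(d^\star_1,\vec{n}^\star_1)$ of $(d_0,\vec{n}_0)$ under the rotation carrying $(a_0,\vec{k}_0),(b_0,\vec{l}_0)$ to $(a_1,\vec{k}_1),(b_1,\vec{l}_1)$ to the true $(d_1,\vec{n}_1)$ by a further rotation about $L_1$; the composition is again a rotation because the two axes meet, and the symmetric construction through $(c,\vec{m})$ yields a second, generically distinct rotation taking $(d_0,\vec{n}_0)$ to $(d_1,\vec{n}_1)$, which by Proposition~\ref{prop:pcen} is precisely the criterion for the pair to belong to a principal contact element net.

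A secondary gap: your claim that the seed $(d_i,\vec{n}_i)$ is ``forced'' by intersecting the two tangent planes and imposing the angle conditions is too quick. The admissible locus for $d_i$ is a whole line (the intersection of the tangent planes of $(b_i,\vec{l}_i)$ and $(c_i,\vec{m}_i)$), and the remaining twist condition along this line is quadratic, with $(a_i,\vec{k}_i)$ itself as one of its two solutions; one must show that exactly one further real solution exists \emph{and} that it realizes the twist with the correct sign. The paper needs Lemma~\ref{lem:halfturn}---the half-turn interchanging $(b_i,\vec{l}_i)$ and $(c_i,\vec{m}_i)$, itself established by a discriminant computation---precisely to settle this in Lemma~\ref{lem:completion-uniqueness}: the second root is the half-turn image of $(a_i,\vec{k}_i)$, which forces the sign. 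Without a substitute for these two lemmas your seed is not yet well defined, so even the uniqueness half of your argument is incomplete.
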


Note that throughout this article we implicitely make a number of
assumptions on the genericity of the involved geometric entities.
More specifically, we require that the Gaussian curvature \eqref{eq:2}
is well-defined for all elementary quadrilaterals of principal contact
element nets (with the notable exception of
Example~\ref{ex:discrete-tractrix}. The meaning of the word
``generic'' in the paragraph folling
Definition~\ref{def:rotation-quadrilateral} is not as easily
explained. We require that indeed two transversals to the four
relative revolute axes exist in algebraic sense. This can be
formulated as a non-vanishing condition of an algebraic expression in
the input paramters. Moreover, neighbouring relative revolute axes are
assumed to be skew.

\section{The Gaussian curvature of Bäcklund mates}
\label{sec:gaussian-curvature}

This section is dedicated to the proof of
Theorem~\ref{th:gaussian-curvature}. Assume that $(p,\vec{n})$ and
$(q,\vec{m})$ are Bäcklund mates, denote the distance of corresponding
points by $d$ and the angle of corresponding normals by $\varphi$. We
want to show that the Gaussian curvature of all elementary
quadrilaterals of $(p,\vec{n})$ is constant and can be computed by
\eqref{eq:1}.

Our first observation concerns two neighbouring pairs of corresponding
contact elements.

\begin{lemma}
  \label{lem:rotation}
  Under the conditions of Theorem~\ref{th:gaussian-curvature}, any two
  neighbouring pairs $(p_0,\vec{n}_0)$, $(p_1,\vec{n}_1)$ and
  $(q_0,\vec{m}_0)$, $(q_1,\vec{m}_1)$ of corresponding contact
  elements correspond in a rotation~$R_{01}$ about the line $L_{01} =
  h_{01} \vee k_{01}$ where $h_{01} = N_0 \cap N_1$ and $k_{01} = M_0
  \cap M_1$.
\end{lemma}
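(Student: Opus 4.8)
The plan is to exhibit the rotation $R_{01}$ explicitly and verify it has the required properties. First I would record what the defining condition of a principal contact element net gives us for the two neighbouring contact elements $(p_0,\vec n_0)$, $(p_1,\vec n_1)$: as noted after Definition~\ref{def:pcen}, the normal lines $N_0$ and $N_1$ meet in a point $h_{01}$, and $h_{01}$ has equal oriented distance from $p_0$ and $p_1$; the same holds for $(q_0,\vec m_0)$, $(q_1,\vec m_1)$ with intersection point $k_{01}$ on $M_0\cap M_1$. So there are two candidate rotations: the one $R$ with axis $N_0\cap N_1 = h_{01}$-pencil taking the contact element $(p_0,\vec n_0)$ to $(p_1,\vec n_1)$ about the line through $h_{01}$ perpendicular to the plane $N_0\vee N_1$, and analogously a rotation $R'$ taking $(q_0,\vec m_0)$ to $(q_1,\vec m_1)$. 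The claim is that these two rotations coincide, and that the common rotation has axis $L_{01}=h_{01}\vee k_{01}$.

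The key step is to use the three Bäcklund conditions to pin down the rigid relationship between the two pairs. Consider the rigid figure $F_0$ consisting of the point $p_0$, the normal line $N_0$, the point $q_0$, and the normal line $M_0$; similarly $F_1$ from the index-$1$ data. Conditions (1) and (2) of Definition~\ref{def:baecklund-transform} say $\dist(p_0,q_0)=\dist(p_1,q_1)=d$ and $\sphericalangle(\vec n_0,\vec m_0)=\sphericalangle(\vec n_1,\vec m_1)=\varphi$; condition (3) (in the perpendicularity form stated after the definition) says the segment $p_iq_i$ is orthogonal to both $\vec n_i$ and $\vec m_i$. These are exactly the invariants that determine a four-line/two-point figure up to a rigid motion (the common perpendicular length $d$, the twist angle $\varphi$, and the positions of $p_i$, $q_i$ along $N_i$, $M_i$ — the latter being controlled because $p_iq_i\perp N_i,M_i$ forces $p_i$, $q_i$ to be the feet of the common perpendicular). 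Hence there is a unique direct displacement $R_{01}$ carrying $F_0$ to $F_1$. Since $R_{01}$ maps $N_0\mapsto N_1$ it fixes their intersection $h_{01}$, and since it maps $M_0\mapsto M_1$ it fixes $k_{01}$; a direct displacement fixing two distinct points $h_{01}\neq k_{01}$ is a rotation about the line $L_{01}=h_{01}\vee k_{01}$ (or the identity, excluded by genericity). Restricting this single rotation to the sub-figures $(p_0,\vec n_0)\mapsto(p_1,\vec n_1)$ and $(q_0,\vec m_0)\mapsto(q_1,\vec m_1)$ gives the statement.

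The main obstacle I anticipate is the subtle orientation/consistency bookkeeping hidden in condition (2): the angle $\varphi$ must be measured counter-clockwise along the ray from $p_i$ to $q_i$, and one must check that this shared convention is what makes the rigid figures $F_0$ and $F_1$ genuinely congruent as \emph{oriented} figures, so that they are related by a \emph{direct} (orientation-preserving) displacement rather than an indirect one. A clean way to handle this is to argue that the Bäcklund data furnish an isometry $F_0\to F_1$ matching orientations of all four lines and the orientation of space, and then invoke uniqueness of such a displacement. A secondary point to verify is that $h_{01}\neq k_{01}$ generically (otherwise $L_{01}$ degenerates), which follows because $h_{01}$ lies on $N_0$ and $k_{01}$ on $M_0$ and $N_0\neq M_0$ under the genericity hypotheses; the coincidence $h_{01}=k_{01}$ would force a special configuration we exclude. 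Everything else is a routine consequence of the rigidity of the figure determined by $(d,\varphi)$ and the feet-of-perpendicular normalization.
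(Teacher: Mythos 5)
Your proposal follows essentially the same route as the paper: use the Bäcklund conditions to establish that the oriented figures $(p_0,\vec{n}_0,q_0,\vec{m}_0)$ and $(p_1,\vec{n}_1,q_1,\vec{m}_1)$ are directly congruent, deduce a direct displacement $R_{01}$ carrying one to the other, observe that $h_{01}$ and $k_{01}$ are fixed points, and conclude that $R_{01}$ is a rotation about $L_{01}$. Your elaboration of why the figures are congruent (the invariants $d$, $\varphi$ and the perpendicularity of $p_iq_i$ to both normals) is a reasonable expansion of the paper's one-line appeal to Definition~\ref{def:baecklund-transform}.

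One step is mis-justified as written: you claim that $R_{01}$ fixes $h_{01}$ ``since $R_{01}$ maps $N_0\mapsto N_1$,'' but a displacement taking one line to another does not in general fix their intersection point --- it merely sends $h_{01}$ to \emph{some} point of $N_1$. The correct reason, which is exactly where Definition~\ref{def:pcen} enters (as the paper's proof makes explicit), is that $h_{01}$ lies at the same oriented distance along $N_0$ from $p_0$ as along $N_1$ from $p_1$; since $R_{01}$ sends $p_0+s\vec{n}_0$ to $p_1+s\vec{n}_1$ for every $s$, it fixes $h_{01}$. You state this equal-distance property in your opening paragraph but do not invoke it at the decisive moment, and the analogous property of $k_{01}$ requires noting that $(q,\vec{m})$ is itself a principal contact element net. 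This is a local repair rather than a structural flaw, but as stated the fixed-point step is a non sequitur.
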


\begin{proof}
  By Definition~\ref{def:baecklund-transform}, the figures formed by
  corresponding contact elements are congruent and of equal
  orientation. Thus, there exists a direct Euclidean displacement
  $R_{01}$ such that $R_{01}(p_0,\vec{n}_0) = (p_1,\vec{n}_1)$ and
  $R_{01}(q_0,\vec{m}_0) = (q_1,\vec{m}_1)$. By
  Definition~\ref{def:pcen} the points $h_{01}$ and $k_{01}$ are
  fix-points of $R_{01}$. Thus, $R_{01}$ is indeed a rotation about
  the line $L_{01}$.
\end{proof}

Consider now corresponding elementary quadrilaterals
$(p_0,\vec{n}_0)$, \ldots, $(p_3,\vec{n}_3)$, and $(q_0,\vec{m}_0)$,
\ldots, $(q_3,\vec{m}_3)$ of the two Bäcklund mates. They define four
relative rotations $R_{01}$, $R_{12}$, $R_{23}$, $R_{30}$ such that
\begin{equation}
  \label{eq:4}
  R_{30} \circ R_{23} \circ R_{12} \circ R_{01} = \identity
\end{equation}
This observation relates the Bäcklund transform of pseudospherical
principal contact element nets to the geometry of rotation
quadrilaterals \cite{schroecker10:_four_positions_theory} and discrete
gliding motions \cite{schroecker10:_discrete_gliding}. We will use
results and techniques from both articles for our proof.

More specifically, we view each figure $(p_i,\vec{n}_i)$,
$(q_i,\vec{m}_i)$ as the position of a rigid body in space. This gives
us four direct Euclidean displacements
\begin{equation}
  \label{eq:5}
  A_i\colon E^3 \to E^3
\end{equation}
from the moving space $E^3$ to an identical copy of $E^3$, the fixed
space. They form a rotation quadrilateral in the sense of
Definition~\ref{def:rotation-quadrilateral}.
Theorem~\ref{th:rotation-quadrilateral} states that the Gaussian
curvature of an elementary quadrilateral depends only on the relative
position of two corresponding normal lines. Since this is the same for
all elementary quadrilaterals, Theorem~\ref{th:gaussian-curvature}
follows.

In our proof of Theorem~\ref{th:rotation-quadrilateral} we use the
dual quaternion calculus of spatial kinematics. It is explained for
example in \cite{husty09:_algebraic_geometry_kinematics}. A short
primer is also given in \cite{schroecker10:_discrete_gliding}. We use
the notation of the latter article. A direct Euclidean displacement is
modelled as a unit dual quaternion $A = \primal{A}+\dunit\dual{A}$
where $\primal{A}$ and $\dual{A}$ are its respective primal and dual
part and $\dunit$ satisifes $\dunit^2 = 0$.  Assembling the components
of primal and dual parts into a homogeneous vector $[\primal{A},
\dual{A}] = [a_0,\ldots,a_7]$, we can view $A$ as a point on the
\begin{equation}
  \label{eq:50}
  \SQ \subset P^7\colon \primal{A} \cdot \dual{A} = 0.
\end{equation}
This quadric is called the \emph{Study quadric.}

\begin{proof}[Proof of Theorem~\ref{th:rotation-quadrilateral}]
  We start by setting up a system of equations that describe a
  rotation quadrilateral whose relative rotation axes in the moving
  space intersect two fixed lines $N$ and $M$. At first, we specify
  the two lines $N$ and $M$ in the moving space. Without loss of
  generality, the normal feet $p$ and $q$ on $N$ and $M$,
  respectively, can be chosen as
  \begin{equation}
    \label{eq:6}
    p = (0, 0, e),\quad
    q = (0, 0, -e)
  \end{equation}
  where $2e = d$. The respective directions vectors of $N$ and $M$ are
  \begin{equation}
    \label{eq:7}
    \vec{n} = (1, t, 0),\quad
    \vec{m} = (1, -t, 0)
  \end{equation}
  where $t$ is related to the angle $\varphi$ between $N$ and $M$ via
  $\varphi = 2 \arctan t$. Note that the vectors $\vec{n}$ and
  $\vec{m}$ are not normalized so that we ultimately will not compute
  the Gaussian image of an elementary quadrilateral on the unit sphere
  but on a sphere of squared radius $1+t^2$. This is admissible since
  the Gaussian curvature will only be multiplied by the constant
  factor $1+t^2$.

  Without loss of generality, the first position of the rotation
  quadrilateral can be taken as the identity, $A_0 = [1,0,\ldots,0]$
  (we use homogeneous coordinates in $P^7$). The two neighbouring
  positions are
  \begin{equation}
    \label{eq:8}
    A_1 = [a_{10}, \ldots, a_{17}],
    \quad\text{and}\quad
    A_3 = [a_{30}, \ldots, a_{37}].
  \end{equation}
  Their components $a_{ij}$ are subjects to seven constraints:
  \begin{itemize}
  \item The relative displacements $R_{01}$ and $R_{03}$ are
    rotations. In the dual quaternion calculus this constraint is
    modelled as
    \begin{equation}
      \label{eq:9}
      \pi_5(A_i \star A_0^{-1}) = 0,
      \quad
      i \in \{1, 3\},
    \end{equation}
    where $\pi_5$ denotes the projection onto the fifth coordinate.
  \item The relative revolute axes $K_{01}$ and $K_{03}$ intersect $M$
    and $N$:
    \begin{equation}
      \label{eq:10}
      \Omega(
        \econj{(R_{0i})} \star \vpart{\econj{(R_{0i} \star A_0)}} \star \econj{(R_{0i}^{-1})}, T
      ) = 0;
    \quad i \in \{1, 3\},\ T \in \{M, N\}.
    \end{equation}
    In this equation, $\Omega$ is the bilinear form associated to the
    Study quadric ($\Omega(X,Y) = \primal{X} \cdot \dual{Y}$), the
    subscript $\dunit$ denotes $\dunit$-conjugation of dual
    quaternions, and $\vpart{X}$ is the vector part of the dual
    quaternion~$X$.
  \item The sought positions lie on the Study quadric $\SQ$:
    \begin{equation}
      \label{eq:11}
      \Omega(A, A) = \primal{A}_i \cdot \dual{A}_i = 0,
      \quad
      i \in \{1,3\}.
    \end{equation}
  \end{itemize}
  Equations~\eqref{eq:9} and \eqref{eq:10} are linear in the
  components of $A_1$ and $A_3$, Equation~\eqref{eq:11} is quadratic.
  Normalizing by $a_{10} = ta_{13}$, the general solution for $A_1$
  reads
  \begin{equation}
    \label{eq:12}
    \begin{gathered}
      A_1 = (ta_{13}, ta_{11}a_{13}, ta_{13}^2, 0, -t^2a_{11}a_{13},
      a_{12}a_{13}, t^2a_{11}^2 - a_{12}^2);\\
      a_{11}, a_{12}, a_{13} \in \RSet.
    \end{gathered}
  \end{equation}
  Here, $a_{11}$, $a_{12}$, and $a_{13}$ serve as free parameters. The
  solution for $A_3$ is obtained by replacing the parameter $a_{1j}$
  with $a_{3j}$ ($j = 1,2,3$).

  Similarly, the missing position $A_2$ is uniquely determined by two
  linear equations $E_1$, $E_2$ of type \eqref{eq:9}, four linear
  equation $E_3$, \ldots, $E_6$ of type \eqref{eq:10} and one
  quadratic equation $E_7$ of type \eqref{eq:11}. Thus, it is the
  second intersection point of a straight line through $A_0$ with the
  Study quadric $\SQ$. The solution is unique and can be computed in
  rational arithmetic.

  While it is possible to compute $A_2$ and verify \eqref{eq:1}
  directly, the involved expressions are excessively long. Our
  implementation of this approach delivers the desired result but only
  after a few hours of computation. Therefore, we favor a method which
  is based on ideal theory and yields a confirmatory answer within a
  few minutes.\footnote{We thank Dominic Walter for his help with this
    approach.} The basic idea is to show that a certain polynomial
  (Equation~\eqref{eq:17}, below) is contained in the ideal spanned by
  the polynomial equations $E_1$, \ldots, $E_7$.

  For $i \in \{0,1,2,3\}$ we compute the images $p_i$ of the normal
  foot $p$ and the images $\vec{n}_i$ of the direction vector
  $\vec{n}$ under the displacement $A_i$ (with the entries $a_{2i}$ of
  $A_2$ left unspecified). This can be accomplished by using the
  following formulas for the action of a dual quaternion $A$ on a
  homogeneous coordinate vector $[x_0,x_1,x_2,x_3] = [x_0,\vec{x}]$:
  \begin{equation}
    \label{eq:13}
    x'_0 + \dunit \vec{x}' = \econj{A} \star (x_0 + \dunit\vec{x}) \star \conj{A}
  \end{equation}
  (the bar denotes dual quaternion conjugation). Note that the use of
  homogeneous coordinates avoids the introduction of
  denominators\,---\,this is highly desirable in CAS calculations of a
  large complexity\,---\,but has to be taken into account later in
  \eqref{eq:14} when computing oriented areas.

  Instead of computing the oriented area of the quadrilaterals $p_0$,
  $p_1$, $p_2$, $p_3$ and $\vec{n}_0$, $\vec{n}_1$, $\vec{n}_2$,
  $\vec{n}_3$ we compute the areas $S$ and $S_0$ of their respective
  projection onto the first and second coordinate. This is possible
  since an orthographic projection does not affect the ratio of areas
  in parallel planes. We have
  \begin{equation}
    \label{eq:14}
    \begin{gathered}
      S = \sum_{i=0}^3 (p_{i,1}p_{i+1,2} - p_{i,2}p_{i+1,1})\varkappa_{i+2}\varkappa_{i+3},\\
      S_0 = \sum_{i=0}^3 (\vec{n}_{i,1}\vec{n}_{i+1,2} - \vec{n}_{i,2}\vec{n}_{i+1,1})\varkappa_{i+2}\varkappa_{i+3}
    \end{gathered}
  \end{equation}
  (indices modulo four). The factors $\varkappa_i = \sum_{j=0}^3
  a_{ij}^2$ compensate the use of homogeneous coordinates in the
  computation of the points $p_i$ and the ideal points (vectors)
  $\vec{n}_i$. Now we have to show that
  \begin{equation}
    \label{eq:15}
    \frac{S_0}{S} = -\frac{t^2}{(1+t^2)e^2},
  \end{equation}
  or, equivalently,
  \begin{equation}
    \label{eq:16}
    (1+t^2)e^2S_0 - t^2S = 0.
  \end{equation}
  Equation~\ref{eq:16} is polynomial. It holds true if its left-hand
  side is contained in the ideal $J$ spanned by the seven equations
  $E_1$, \ldots, $E_7$ that determine the fourth position of
  $A_2$. The indeterminates are $a_{20}$, \ldots, $a_{27}$ while
  $a_{11}$, $a_{12}$, $a_{13}$, $a_{31}$, $a_{32}$, $a_{33}$, $e$, $t$
  serve as parameters. We perform the necessary algebraic
  manipulations by means of the CAS Maple~13 and assume that the
  defining equations of $A_2$ are stored in \verb|E1|, \ldots,
  \verb|E7|. Moreover, the simplifying normalization $a_{20} = 1$ is
  applied.
  \begin{Verbatim}[numbers=left]
with(Groebner):
# J defined by linear and quadratic equations
J := [E1, E2, E3, E4, E5, E6, E7]:

# Use tdeg term order:
TO := tdeg(a21, a22, a23, a24, a25, a26, a27): 

# Interreduce J and reduce R with respect to this reduced ideal.
# The result is indeed 0, showing that R is contained in J.
J := InterReduce(J, TO):
Reduce([R], J, TO)[1];
  \end{Verbatim}
  The final output is $0$, showing that \verb|R| is indeed contained
  in the ideal~\verb|J|.
\end{proof}

This proof also finishes the proof of
Theorem~\ref{th:gaussian-curvature}. A few explaining remarks
concerning the computer calculation seem appropriate:

\begin{itemize}
\item The \verb|tdeg|-term ordering (Line~6) is primarily by total
  degree and then reversed lexicographic.
\item Inter-reducing the generating elements of the ideal \verb|J|
  with respect to the term order \verb|TO| (Line~10) produces a list
  of polynomials that generate the same ideal \verb|J| but are reduced
  in the sense that no monomial is reducible by the leading monomial
  of another list element.
\item The command \verb|Reduce| (Line~11) computes the remainder of
  \verb|R| divided by the polynomials in \verb|J|. Its vanishing
  implies that \verb|R| is indeed contained in the ideal spanned by
  \verb|J|.
\end{itemize}

The running time of this implementation is approximately six
minutes. Minor improvements are possibly by adapting the order of the
variables in Line~6. We regret not being able to offer a proof of
Theorem~\ref{th:gaussian-curvature} that does not rely on a computer
algebra system. The main reason for the computational difficulties (in
spite of the fact that the problem is linear) is the high number of
six free parameters. A more insightful proof or a proof with manually
tractable calculations would be desirable.

\section{Construction of the Bäcklund transform}
\label{sec:construction}

In this section we provide an algorithm for actually constructing (or
computing) the Bäcklund transforms of a pseudospherical principal
contact element net. The proof of correctness of this algorithm is at
the same time the proof of Theorem~\ref{th:baecklund-construction}.

It is a simple observation that the Bäcklund transform $(q,\vec{m})$
in Theorem~\ref{th:baecklund-construction} is necessarily
unique. Existence is a different matter. Given the principal contact
element net $(p,\vec{n})$ and the contact element $(q_i,\vec{m}_i)$,
any neighbour $(q_j,\vec{m}_j)$ to $(q_i,\vec{m}_i)$ is uniquely
determined. It is found by the following steps whose necessity has
already been discussed:
\begin{itemize}
\item Determine the points $h_{ij} = N_i \cap N_j$ and $k_{ij} = M_i
  \cap \beta_{ij}$ where $\beta_{ij}$ is the bisector plane of $p_i$
  and $p_j$.
\item Determine the unique rotation $R_{ij}$ about the axis $L_{ij} =
  h_{ij} \vee k_{ij}$ that transforms $(p_i,\vec{n}_i)$ into
  $(p_j,\vec{n}_j)$.
\item The contact element $(q_j,\vec{m}_j)$ is the image of
  $(q_i,\vec{m}_i)$ under the rotation~$R_{ij}$.
\end{itemize}
We refer to this construction briefly as the ``neighbour
construction''. Note that a minor variation takes as input the
oriented lines $N_i$, $N_j$ and $M_i$ (as opposed to three contact
elements) and returns the oriented line $M_j$.

We provide some more information on the actual computation of the
neighbour construction\,---\,not only as a service for the reader but
also because we need them later in the proof of
Lemma~\ref{lem:projectivity} and
Theorem~\ref{th:baecklund-construction}.

Throughout this calculation we use homogeneous coordinates to describe
points and planes, Plücker coordinates to describe lines and
homogeneous four by four matrices to describe Euclidean
displacements. (Here, the use of dual quaternion calculus does not
provide a significant advantage.)  The bisector plane $\beta$ of two
non-ideal points $[x_0,x_1,x_2,x_3] = [x_0,\vec{x}]$ and
$[y_0,y_1,y_2,y_3]=[y_0,\vec{y}]$ has homogeneous coordinates
\begin{equation}
  \label{eq:17}
  [
    y_0^2 \;\vec{x} \cdot \vec{x} - x_0^2 \;\vec{y} \cdot \vec{y},
    2x_0y_0 \;\vec{x} \times \vec{y}
  ].
\end{equation}
The intersection point of the plane $[u_0,u_1,u_2,u_3] =
[u_0,\vec{u}]$ and the line with Plücker coordinates
$[g_1,g_2,g_3,h_1,h_2,h_3] = [\vec{g},\vec{h}]$ equals
\begin{equation}
  \label{eq:18}
  [x_0,\vec{x}] = [\vec{u} \cdot \vec{g}, -u_0\vec{g} + \vec{u} \times \vec{h}].
\end{equation}
Because of $h_{ij} = N_i \cap \beta_{ij} = N_j \cap \beta_{ij}$ this
formula can be used to determine both, $k_{ij}$ and $h_{ij}$.

The rotation $R_{ij}$ can be conveniently written as the composition
of two reflections in the bisector plane $\beta_{ij}$ and the plane
$\gamma_{ij} = p_j \vee h_{ij} \vee k_{ij}$. This is true since the
composition of two reflections in planes $\beta$ and $\gamma$ is a
rotation about the intersection line $L = \beta \cap
\gamma$. Moreover, it is obvious that $(p_i,\vec{n}_i)$ is transformed
into $(p_j,\vec{n}_j)$. The homogeneous transformation matrix
$[R_{ij}]$ of $R_{ij}$ is given by
\begin{equation}
  \label{eq:19}
  [R_{ij}] = [C_{ij}] \cdot [B_{ij}]
\end{equation}
where $[B_{ij}]$ and $[C_{ij}]$ are the reflection matrices in
$\beta_{ij}$ and $\gamma_{ij}$, respectively. They can be computed by
means of the general formula for the reflection matrix in a plane
$[u_0,u_1,u_2,u_3]$ which reads
\begin{equation}
  \label{eq:20}
  \begin{bmatrix}
    u_1^2 + u_2^2 + u_3^2 & 0                      & 0                     & 0        \\
    -2u_0u_1              & -u_1^2 + u_2^2 + u_3^2 & -2u_1u_2              & -2u_1u_3 \\
    -2u_0u_2              & -2u_1u_2               & u_1^2 - u_2^2 + u_3^2 & -2u_2u_3 \\
    -2u_0u_3              & -2u_1u_3               & -2u_2u_3              & u_1^2 + u_2^2 - u_3^2
  \end{bmatrix}.
\end{equation}
Equations~\eqref{eq:17}--\eqref{eq:20} are sufficient for a CAS
implementation of the neighbour construction.

We return to the proof of Theorem~\ref{th:baecklund-construction}
where existence is still open. The problem with the neighbour
construction is that it will produce contradictions for a general
choice of $(q_0,\vec{m}_0)$. Consider only an elementary quadrilateral
$(p_0,\vec{n}_0)$, \ldots, $(p_3,\vec{n}_3)$ and the contact element
$(q_0,\vec{m}_0)$. By the neighbour construction we obtain (in that
order) the contact elements $(q_1,\vec{m}_1)$, $(q_2,\vec{m}_2)$, and
$(q_3,\vec{m}_3)$. Applying the neighbour construction once more, we
get a contact element $(q^\star_0,\vec{m}^\star_0)$ which should equal
$(q_0,\vec{m}_0)$.

Since the Gaussian curvature $K$ and the distance $d$ between
corresponding points is already defined, we necessarily have to choose
$\vec{m}_0$ so that the angle $\varphi =
\sphericalangle(\vec{n}_0,\vec{m}_0)$ satisfies \eqref{eq:1}. Thus,
there are only four choices for the unit vector $\vec{m}_0$ and only
two choices for the normal line $M_0$. We will show that all of them
lead to valid solutions. This is also enough to ensure that the
neighbour construction can be consistently applied to the whole
principal contact element net. The proof of this still needs some
preparatory work.

To begin with, we mention those configuration that will be identified
as false positives by our test $(q_0,\vec{m}_0) = (q'_0,\vec{m}'_0)$.
\begin{itemize}
\item If $M_0$ intersects the axis of the circle through $p_0$, $p_1$,
  $p_2$, $p_3$, the composition of the four rotations yields the
  identity but the homologous contact elements $(q_0,\vec{m}_0)$,
  \ldots, $(q_3,\vec{m}_3)$ do not form the elementary quadrilateral
  of a principal contact element net.
\item If $M_0$ is parallel to $N_0$ the action of the rotation
  $R_{i,i+1}$ on $(q_i,\vec{m}_i)$ equals that of the reflection in
  $\beta_{i,i+1}$. Therefore, we get $(q^\star_0,\vec{m}^\star_0) =
  (q_0,\vec{m}_0)$ also in this case. The Gaussian curvature of the
  two elementary quadrilaterals is, however, different.
\end{itemize}
It goes without saying that both configurations violate the
pre-requisites of Theorem~\ref{th:rotation-quadrilateral}.

The following lemma states that the neighbour construction acts
projectively on the set of lines obtained by revolving $M_0$ about
$N_0$. It allows a simplifying assumption during the later
calculations.

\begin{lemma}
  \label{lem:projectivity}
  Consider three lines $N_0$, $N_1$ and $M_0$ such that $M_0$ and
  $N_0$ are skew while $N_0$ and $N_1$ are intersecting. Denote the
  set of lines obtained by rotating $M_0$ about $N_0$ by
  $\lset{M}_0$. Then the neighbour construction with respect to $N_0$
  and $N_1$ (equipped with some orientation) induces a projective
  mapping between $\lset{M}_0$ and its image $\lset{M}_1$ under the
  neighbour construction.  (Figure~\ref{fig:projectivity}).
\end{lemma}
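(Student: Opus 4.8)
The plan is to turn the neighbour construction into a self-map of $P^1$ and to recognise that map as a birational one, hence a projectivity. Fix orientations of $N_0$ and $N_1$, put $h_{01}=N_0\cap N_1$, and parametrise $\lset{M}_0$ by the angle $\theta$ of the rotation about $N_0$ carrying $M_0$ to a given line $M(\lambda)$ of $\lset{M}_0$, with $\lambda=\tan(\theta/2)$ as the actual parameter; this identifies $\lset{M}_0$ with $P^1$, and the corresponding Plücker coordinates depend rationally (in fact quadratically) on $\lambda$ --- equivalently, $1+\lambda\,\ell_0$ is the dual quaternion of that rotation, with $\ell_0$ representing $N_0$. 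Geometrically $\lset{M}_0$ rules the hyperboloid of revolution $\mathcal{H}_0$ swept out by $M_0$ about $N_0$, and one checks that the construction below maps $\lset{M}_0$ into the rulings of a hyperboloid of revolution $\mathcal{H}_1$ about $N_1$: the rotation $R_{01}(\lambda)$ produced by the construction carries $\mathcal{H}_0$ to a surface of revolution about $R_{01}(\lambda)(N_0)=N_1$, and this surface is independent of $\lambda$ because $R_{01}(0)^{-1}R_{01}(\lambda)$ fixes $h_{01}$ and fixes $N_0$ setwise, hence (generically) is a rotation about $N_0$ and preserves $\mathcal{H}_0$. So $\lset{M}_1$ too is intrinsically a $P^1$.

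First I would run the neighbour construction symbolically as a function of $\lambda$, using only the explicit formulas \eqref{eq:17}--\eqref{eq:20}. The bisector plane $\beta_{01}$ --- the plane through $h_{01}$ with normal $\vec{n}_0-\vec{n}_1$ --- is fixed; the point $k_{01}(\lambda)=M(\lambda)\cap\beta_{01}$ is rational in $\lambda$ by \eqref{eq:18}; and then the axis $L_{01}(\lambda)=h_{01}\vee k_{01}(\lambda)$, the second reflecting plane $\gamma_{01}(\lambda)$, the two reflection matrices \eqref{eq:20}, their product $R_{01}(\lambda)$ from \eqref{eq:19}, and finally $M_1(\lambda)=R_{01}(\lambda)(M(\lambda))$ are all rational in $\lambda$. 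Parametrising $\lset{M}_1$ by rotation about $N_1$ with a parameter $\mu$ and reading $\mu$ off from $M_1(\lambda)$ exhibits the neighbour construction as a rational map $\lambda\mapsto\mu$ of $P^1$; a crude degree count makes it look like degree three or four.

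The heart of the matter is that this rational map is invertible with a rational inverse --- namely the neighbour construction with respect to $N_1$ and $N_0$. The bisector plane for the ordered pair $(N_1,N_0)$ has normal $\vec{n}_1-\vec{n}_0$, so it is again $\beta_{01}$; the point $k_{01}(\lambda)$ lies on $L_{01}(\lambda)$ and on $M(\lambda)$, hence also on $M_1(\lambda)=R_{01}(\lambda)(M(\lambda))$, so it is the point $\beta_{01}$ cuts out on $M_1(\lambda)$ as well; therefore the reversed construction reproduces the axis $L_{01}(\lambda)$ and the rotation $R_{01}(\lambda)^{-1}$ and sends $M_1(\lambda)$ back to $M(\lambda)$. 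Thus $\lambda\mapsto\mu$ is a birational self-map of $P^1$, and every such map extends to an automorphism of $P^1$, i.e.\ an element of $\mathrm{PGL}(2)$. Hence $\lambda\mapsto\mu$ is a Möbius transformation, which is exactly the asserted projectivity; the cancellations missing from the degree count are supplied by the finitely many exceptional parameters at which the construction degenerates (for instance $M(\lambda)\parallel N_0$, or $M(\lambda)$ meeting the axis of $\mathcal{H}_0$), and this is harmless since a birational self-map of $P^1$ is a projectivity regardless of its domain of definition.

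The step I expect to be delicate is the reversibility argument: one must check that the bisector plane really is insensitive to the order of $N_0$ and $N_1$, that $M(\lambda)\cap\beta_{01}$ and $M_1(\lambda)\cap\beta_{01}$ are genuinely the same single point (true away from the degenerate configurations already isolated above), and that orientations are carried along so that the reversed construction is literally the inverse and not just a one-sided inverse up to a sign. Should these bookkeeping issues prove stubborn, the computational alternative --- in the spirit of the other proofs in this article --- is to place $N_0$ and $N_1$ in a convenient normal form, let a CAS compute $\lambda\mapsto\mu$ from \eqref{eq:17}--\eqref{eq:20}, and verify directly that numerator and denominator share the expected common factors, so that the map collapses to degree one.
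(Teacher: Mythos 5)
Your argument is correct but follows a genuinely different route from the paper. The paper proves the lemma by brute force: it places $N_0$, $N_1$, $M_0$ in the normal form \eqref{eq:21}, computes $k_{01}(\lambda)$ and the two reflections from \eqref{eq:17}--\eqref{eq:20}, obtains $M_1(\lambda)$ as a polynomial vector of degree $20$ in $\lambda$, and then exhibits the common factor \eqref{eq:23} of degree $18$, so that a quadratic (hence projective) parametrization remains. You replace this computation by a birationality argument: the forward map $\lambda\mapsto\mu$ is rational, the neighbour construction from $N_1$ back to $N_0$ is a rational inverse, and a birational self-map of $P^1$ is an element of $\mathrm{PGL}(2)$. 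The ingredients you use do check out. Your claim that $\lset{M}_1$ lies in the regulus of a fixed hyperboloid about $N_1$ is correct, since $R_{01}(0)^{-1}\circ R_{01}(\lambda)$ fixes $h_{01}$ and the oriented line $N_0$ and is therefore a rotation about $N_0$. The reversibility step you flag as delicate also goes through: $\beta_{01}$ is symmetric in $N_0$ and $N_1$; the point $k_{01}(\lambda)$ lies on the axis $L_{01}(\lambda)$, hence is fixed by $R_{01}(\lambda)$ and lies on $M_1(\lambda)$, so generically it is the unique point of $M_1(\lambda)\cap\beta_{01}$; and the rotation produced by the reversed construction is exactly $R_{01}(\lambda)^{-1}$ because its second reflecting plane $N_0\vee k_{01}(\lambda)$ is the image of $\gamma_{01}=N_1\vee k_{01}(\lambda)$ under the reflection in $\beta_{01}$, whence the composition of the two reflections in reversed order equals $\beta_{01}\circ\gamma_{01}=R_{01}^{-1}$. (This last identity is the one assertion you state without proof; it deserves the one-line justification just given.) What your approach buys is a computation-free proof and an explanation of \emph{why} a degree-$18$ factor such as \eqref{eq:23} must cancel; what the paper's computation buys in exchange is the explicit factor $G(\lambda)$ of \eqref{eq:24}, whose zeros identify the degenerate configurations discussed in the remark following the proof.
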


\begin{figure}
  \centering
  \includegraphics{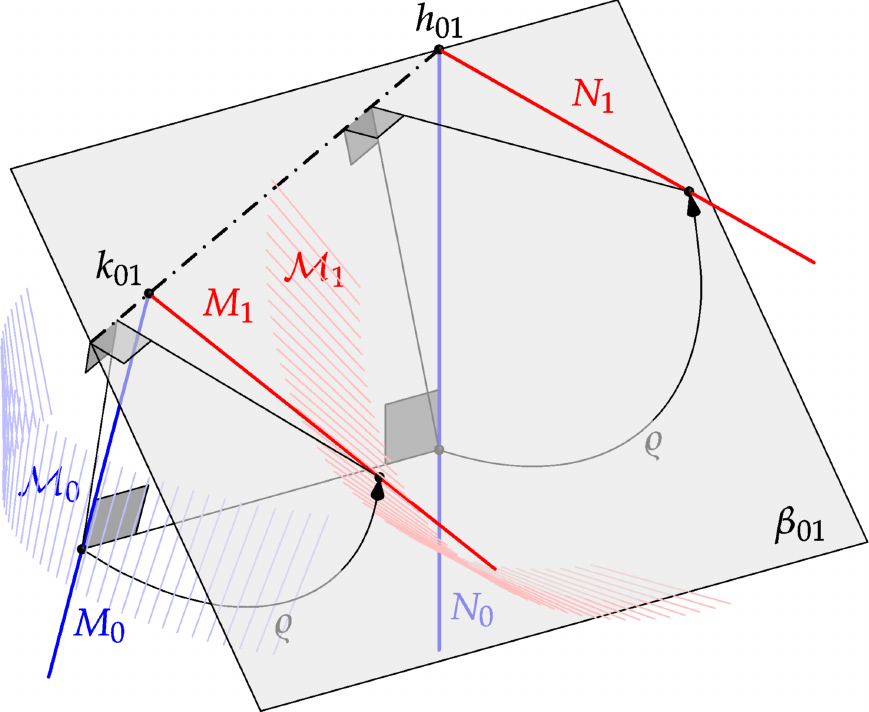}
  \caption{Notation in Lemma~\ref{lem:projectivity}}
  \label{fig:projectivity}
\end{figure}

\begin{proof}
  Our computation is based on a Cartesian coordinate frame $\{o; x, y,
  z\}$ which is projectively extended such that vanishing of the first
  coordinate characterizes ideal points. We choose $N_0$ as the
  $z$-axis and $M_0$ such that the shortest distance between $N_0$ and
  $M_0$ is attained at the pedal points $[1,0,0,0] \in N_0$ and
  $[1,1,0,0] \in M_0$. The ideal point on $M_0$ than equals
  $[0,0,1,v]$ with some $v \in \RSet$. Because of the rotational
  symmetry of the set $\lset{M}_0$, we may assign the coordinates $[s,
  0, t, u]$ to the bisector plane $\beta_{01}$ of $N_0$ and $N_1$.
  Identifying geometric entities with their homogeneous coordinate
  vectors, we can write
  \begin{equation}
    \label{eq:21}
    \begin{gathered}
      N_0 = [0, 0, 1, 0, 0, 0],\
      N_1 = [0, 2tu, u^2-t^2,2st,0,0],\\
      h_{01} = N_0 \cap \beta_{01} = N_0 \cap N_1 = [u, 0, 0, -s],\\
      M_0(\lambda) = [-2\lambda, \lambda^2-1, u(\lambda^2+1),
                      2u\lambda, u(1-\lambda^2), 1+\lambda^2].
    \end{gathered}
   \end{equation}
   As expected, $M_0(\lambda)$ is a quadratic parametrization in
   Plücker coordinates of the line-set $\lset{M}_0$. The theorem's
   statement amounts to saying that $M_1(\lambda)$ is also
   quadratic. The intersection point of $M_0$ with $\beta_{01}$ equals
   \begin{equation}
     \label{eq:22}
     k_{01}(\lambda) =
     \begin{bmatrix}
       uv-t + (t+uv)\lambda^2 \\
       t-uv + 2s\lambda + (t+uv)\lambda^2 \\
       s + 2uv\lambda - s\lambda^2 \\
       -sv-2tv\lambda-sv\lambda^2
     \end{bmatrix}.
   \end{equation}
   Now we compute $M_1(\lambda)$ by reflecting $M_0(\lambda)$, at
   first in the plane $\beta_{01}$ and then in $\gamma_{01} = N_1 \vee
   k_{01}(\lambda)$. We refrain from giving the details of the
   calculation; the relevant formulas are
   \eqref{eq:17}--\eqref{eq:20}. The outcome is a polynomial vector
   function $M_1(\lambda)$ which a priory is of degree 20 in
   $\lambda$. However, all of its entries have the common factor
   \begin{equation}
     \label{eq:23}
     (t^2+u^2)^9(1+\lambda^2)^8G(\lambda)
   \end{equation}
   with the quadratic polynomial
   \begin{equation}
     \label{eq:24}
     G(\lambda) = s^2+t^2-2tuv+u^2v^2+ 4st\lambda + (s^2+t^2+2tuv+u^2v^2)\lambda^2.
   \end{equation}
   This common factor \eqref{eq:23} can be split off so that only a
   quadratic component remains.
\end{proof}

\begin{remark}
  The vanishing of the polynomial \eqref{eq:23} has a geometric
  meaning.
  \begin{itemize}
  \item If $t^2+u^2$ vanishes, the plane $\beta_{01}$ is at infinity
    and the reflection in $\beta_{01}$ becomes undefined.
  \item If $1+\lambda^2$ vanishes, the direction of the revolute axis
    $\beta_{01} \cap \gamma_{01}$ is isotropic and the rotation
    becomes singular. The same is true for the zeros of $G(\lambda)$.
  \end{itemize}
  All of these instances are irrelevant in our setting.
\end{remark}

Lemma~\ref{lem:projectivity} implies that the map
\begin{equation}
  \label{eq:25}
  \eta\colon \lset{M}_0 \to \lset{M}_0,
  \quad
  M_0 \mapsto M^\star_0,
\end{equation}
as a composition of neighbour maps, is projective as well. The
line-set $\lset{M}_0$ is of course the set obtained by subjecting
$M_0$ to all rotations about $N_0$. We conclude that $\eta$
generically has two fixed lines\,---\,those lines of $\lset{M}_0$ that
intersect the $z$-axis. We are interested in configurations where a
non-trivial fixed line $M_0 \in \lset{M}_0$ exists. In this case
$\eta$ is the identity on $\lset{M}$. This can also be phrased as
follows: \emph{If $(q_0,\vec{m}_0)$ is such that the neighbour
  construction is free of contradictions, the same is true for every
  contact element that is obtained from $(q_0,\vec{m}_0)$ by a
  rotation about $N_0$.} This observation gives us one extra degree of
freedom for a simplifying assumption in the

\begin{proof}[Proof of Theorem~\ref{th:baecklund-construction}]
  The basic idea is to show that in the pencil of lines $M_0$ incident
  with $q_0$ and perpendicular to $p_0 \vee q_0$ precisely two lines
  result in a closed chain of neighbouring contact elements obtained
  by successive application of the neighbour construction. These
  solutions necessarily satisfy the condition of the theorem.

  We begin by assigning homogeneous coordinates to the four points
  $p_0$, $p_1$, $p_2$, $p_3$. Without loss of generality we may set
  \begin{equation}
    \label{eq:26}
    p_i = [1+t_i^2, 1-t_i^2, 2t_i, 0],
    \quad
    i = 0,\ldots,3.
  \end{equation}
  The values $t_i$ are in $\RSet \cup \{\infty\}$ and $t_i = \infty$
  corresponds to the point $[1,-1,0,0]$. The following calculations do
  not take into account this exceptional value but could easily be
  adapted to handle this situation. Moreover, we make the admissible
  simplification $t_0 = 0$.

  The bisector planes of the points $p_i$ and $p_j$ are
  \begin{equation}
    \label{eq:27}
    \beta_{ij} = [0,S_{ij},T_{ij}, 0],
    \quad
    i,j = 0,\ldots,3;\ i \neq j
  \end{equation}
  where $S_{ij} = t_i+t_j$ and $T_{ij} = t_it_j-1$. The reflection in
  $\beta_{ij}$ is described by the matrix
  \begin{equation}
    \label{eq:28}
    [B_{ij}]
    \begin{bmatrix}
      S_{ij}^2+T_{ij}^2 & 0                   & 0                 & 0 \\
      0                 & T_{ij}^2 - S_{ij}^2 & -2S_{ij}T_{ij}    & 0 \\
      0                 & -2S_{ij}T_{ij}      & S_{ij}^2-T_{ij}^2 & 0 \\
      0 & 0 & 0 & S_{ij}^2 + T_{ij}^2
    \end{bmatrix}.
  \end{equation}
  Starting with the normal vector $\vec{n}_0 = [0, u, v, 1]$, we
  compute
  \begin{equation}
    \label{eq:29}
    \vec{n}_1 = [B_{01}] \cdot \vec{n}_0,
    \quad
    \vec{n}_3 = [B_{30}] \cdot \vec{n}_0,
    \quad
    \vec{n}_2 = [B_{21}] \cdot \vec{n}_1.
  \end{equation}

  The first point $q_0$ of the Bäcklund transform should be such that
  the connecting line $p_0 \vee q_0$ is perpendicular to
  $\vec{n}_0$. Due to Lemma~\ref{lem:projectivity}, the possible
  choices for $\vec{m}_0$ can be parametrized as
  \begin{equation}
    \label{eq:30}
    \vec{m}_0 = [0, \lambda u, \lambda v, ev+\lambda].
  \end{equation}
  Note that the invalid choice $\vec{m}_0 \parallel \vec{n}_0$
  corresponds to $\lambda \to \infty$ and will automatically drop out
  during our calculations. The second invalid choice is obtained for
  $\lambda = 0$ in which case the line $M_0$ intersects the circle
  axis.

  The point $q_0$ is then
  \begin{equation}
    \label{eq:31}
    q_0 = [1+t_0^2, 1-t_0^2+ev, 2t_0-eu, 0]
  \end{equation}
  where $e$ is a parameter that determines the distance between $p_0$
  and $q_0$.
  
  Using the additional admissible simplification $t_0 = 0$, the
  intersection point $k_{0j} = M_0 \cap \beta_{01}$ becomes
  \begin{equation}
    \label{eq:32}
    k_{0j} =
    \begin{bmatrix}
      (ut_1-v)\lambda \\
      -(v+e(u^2+v^2))\lambda \\
      -(v+e(u^2+v^2))t_1\lambda \\
      -(t_1+t_1e(u+v))(ev+\lambda)
    \end{bmatrix}, \quad j \in \{1,3\}.
  \end{equation}

  By means of \eqref{eq:19} and \eqref{eq:20}, we compute the
  rotations $R_{01}$ and $R_{03}$ to obtain the homogeneous
  coordinates of the points $q_1$, $q_3$ and the vectors (ideal
  points) $\vec{m}_1$, $\vec{m}_3$. Common factors of the shape
  \begin{equation}
    \label{eq:33}
    e^2(u^2+v^2)(1+t_j^2)^4,\quad j \in \{1, 3\},
  \end{equation}
  can be split off. The resulting expressions are just a little too
  long to be displayed here.

  In the same way, we compute the intersection points $k_{12}$ and
  $k_{23}$, where we immediately split off the common factor
  \begin{equation}
    \label{eq:34}
    (1+t_j^2)^2((u^2+v^2+1)(ut_j-v)^2\lambda^2 + v^2(eu + t_j(ev+1))^2),
    \quad
    j \in \{1, 3\},
  \end{equation}
  the rotation $R_{12}$, and the vector $\vec{m}_2$. This latter
  vector can be divided by
  $e^2(u^2+v^2)(1+t_1^2)^6(1+t_2^2)^4Q(\lambda)$
  where
  \begin{multline}
    \label{eq:35}
    Q(\lambda) = (u^2+v^2+1)^2(t_1u-v)^4\lambda^4 +\\
    2v^2(eu+t_1(ev+1))^2(u^2+v^2+1)(t_1u-v)^2\lambda^2 + v^4(eu+t_1(ev+1))^4.
  \end{multline}

  The closure condition of the neighbor construction is now the linear
  dependence of $\vec{m}_2$ and the ideal point (direction vector)
  $[0, \vec{k}_2]$ of $k_{12} \vee k_{23}$. It turns out that
  $\vec{k}_2$ can be divided by
  \begin{equation}
    \label{eq:36}
    (t_1-t_3)(v+e(u^2+v^2))\lambda.
  \end{equation}
  We compute the cross-product $\vec{m}_2 \times \vec{k}_2$ and
  extract the greatest common divisor
  \begin{equation}
    \label{eq:37}
    2evt_2 G_1(\lambda) G_2(\lambda)
  \end{equation}
  of its entries. Here, $G_1(\lambda)$ and $G_2(\lambda)$ are
  quadratic polynomials in $\lambda$.

  One of them, say $G_1$, equals $\lambda^{-1}$ times the homogenizing
  coordinate of $k_{12}$. It is a spurious solution since it vanishes
  in configurations where $\vec{k}_2$ and $\vec{m}_2$ are parallel but
  $M_2$ is different from $k_{12} \vee k_{23}$. The second factor
  equals
  \begin{multline}
    \label{eq:38}
    G_2 = v^2(T_3(e^2u^2+1-e^2v^2)-2T_2e^2uv) +
    + 2ev(T_3(u^2-v^2)-2T_2uv)\lambda\\
    + (T_3(u^2-v^2) - 2uvT_2)(v^2+u^2+1)\lambda^2
  \end{multline}
  where
  \begin{equation}
    \label{eq:39}
    T_2 = t_1t_2-t_1t_3+t_2t_3+1
    \quad\text{and}\quad
    T_3 = t_1t_2t_3+t_1-t_2+t_3.
  \end{equation}
  Its vanishing characterizes the positions of $M_0$ that lead to
  valid solutions. Since \eqref{eq:38} is quadratic in $\lambda$,
  there are precisely two solutions. This finishes the proof of
  Theorem~\ref{th:baecklund-construction}.
\end{proof}

\begin{example}
  \label{ex:discrete-tractrix}
  The maybe simplest example of a pair of Bäcklund mates in the
  continuous settings consists of a straight line $Z$, viewed as a
  degenerate pseudosphere, and the surface of revolution $\Psi$
  obtained by rotating a tractrix with asymptote $Z$ about $Z$. We
  apply our construction to a discrete version of this configuration
  which is actually too irregular to be covered by our
  theory. Nonetheless, we are able to illustrate our basic ideas and
  to recover the geometric essence of the continuous case.

  We define the first principal contact element net $(p,\vec{n})$ as
  follows:
  \begin{equation}
    \label{eq:40}
    \begin{aligned}
      p\colon \ZSet^2 \to \RSet^3,\quad &(i,j) \mapsto (0, 0, j)\\
      \vec{n}\colon \ZSet^2 \to \RSet^3,\quad & \textstyle (i,j)
      \mapsto (0, \cos\bigl(\frac{2i\pi}{k}\bigr),
      \sin\bigl(\frac{2i\pi}{k}\bigr), 0)
    \end{aligned}
  \end{equation}
  The integer $k \in \NSet$ is a shape parameter that affects the
  discretization of the revolute surface.

  The principal contact element net $(p,\vec{n})$ is irregular in the
  sense that the Gaussian curvature of the elementary quadrilaterals
  is undefined. Nonetheless, we can construct the Bäcklund transform
  $(q,\vec{m})$ to $(p,\vec{n})$, defined by the contact element
  $(q_{0,0},\vec{m}_{0,0})$ with $q_{0,0} = (0, d, 0)$, $\vec{m}_{0,0}
  = (0, \cos\alpha, \sin\alpha)$ and $d \in \RSet$. It is depicted in
  Figure~\ref{fig:discrete-tractrix} where we use $\alpha =
  \frac{\pi}{2}$.

  According to our theory, the contact element
  $(q_{0,j+1},\vec{m}_{0,j+1})$ is obtained from
  $(p_{0,j},\vec{n}_{0,j})$, $(p_{0,j+1}, \vec{n}_{0,j+1})$ and
  $(q_{0,j},\vec{m}_{0,j})$, see Figure~\ref{fig:discrete-tractrix},
  left (where we write $p_j$, $q_j$ etc. instead of $p_{0,j}$,
  $q_{0,j}$). It never leaves the plane $x = 0$ and produces a
  discrete curve with vertices $q_{0,j}$ and normal vectors
  $\vec{m}_{0,j}$. The choice $\alpha = \frac{\pi}{2}$ ensures that
  the length of the tangent segment between $q_{0,j}$ and the $z$-axis
  is constant. Hence, we may address the curve as \emph{discrete
    tractrix}.

  The contact element $(q_{i+1,j},\vec{m}_{i+1,j})$ is constructed
  from the contact elements $(p_{i,j},\vec{n}_{i,j})$,
  $(p_{i+1,j},\vec{n}_{i+1,j})$, and $(q_{i,j},\vec{m}_{i,j})$. The
  intersection point of $M_{i,j}$ with the bisector plane of $p_{i,j}$
  and $p_{i+1,j}$ is the ideal point of the $z$-axis so that
  $(q_{i,j},\vec{m}_{i,j})$ and $(q_{i+1,j},\vec{m}_{i+1,j})$
  correspond in a rotation about the $z$-axis through the angle
  $\frac{2\pi}{k}$. The resulting discrete revolute surface can be
  seen in Figure~\ref{fig:discrete-tractrix}, right. Its Gaussian
  curvature is indeed constant and negative. We refrain from deriving
  analytic expressions for describing this discrete surface. These
  have already been established in \cite{bobenko10:_curvature_theory}.
\end{example}

\begin{figure}
  \centering
  \hspace*{0.2\linewidth}
  \includegraphics{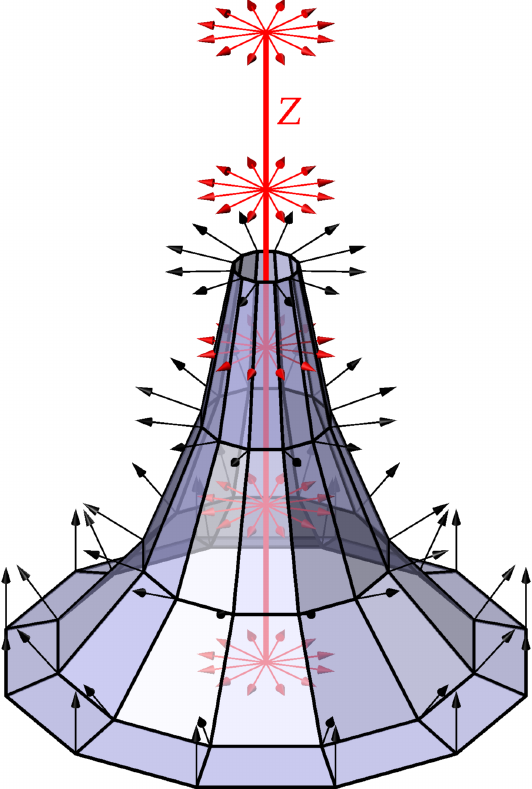}
  \hspace*{-1.0\linewidth}
  \includegraphics{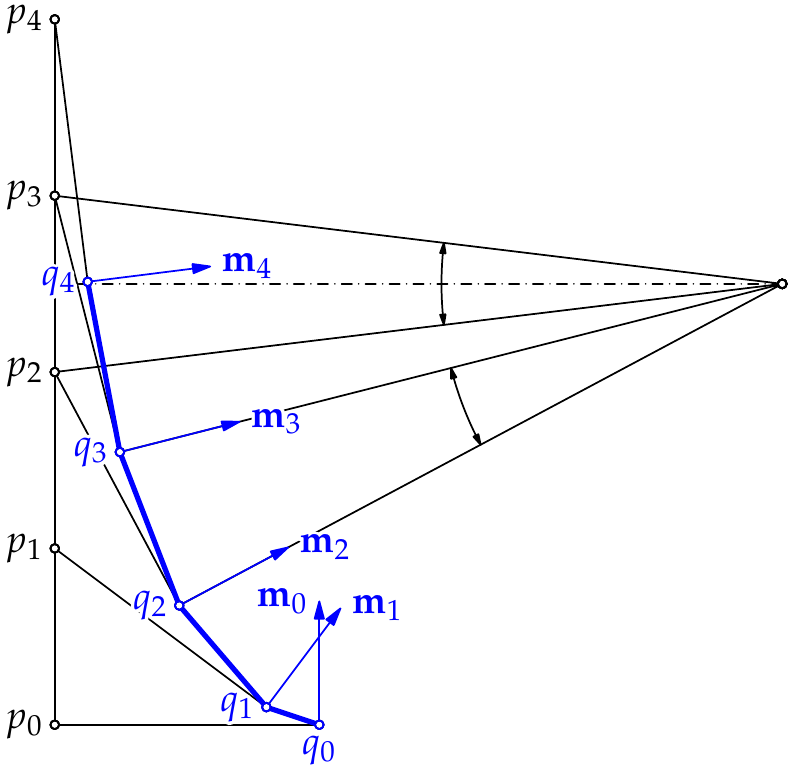}
  \caption{Discrete tractrix and pseudospherical principal contact
    element net of revolution}
  \label{fig:discrete-tractrix}
\end{figure}

\section{Bianchi's Permutation Theorem}
\label{sec:permutation}

Now we prove Bianchi's Permutation
Theorem~\ref{th:bianchi-permutation}, our final major result. We split
our proof into a series of intermediate steps.

\begin{lemma}
  \label{lem:halfturn}
  Assume that $(b,\vec{l})$, and $(c,\vec{m})$ are both Bäcklund
  transforms of the same twist of $(a,\vec{k})$. If the contact
  elements $(b_0,\vec{l}_0)$ and $(c_0,\vec{m}_0)$ correspond to
  $(a_0,\vec{l}_0)$ there exists a half-turn that interchanges
  $(b_0,\vec{l}_0)$ and~$(c_0,\vec{m}_0)$
  (Figure~\ref{fig:pce-completion}).
\end{lemma}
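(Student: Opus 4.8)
The plan is to extract purely local information at the corresponding contact elements $(a_0,\vec{k}_0)$, $(b_0,\vec{l}_0)$, $(c_0,\vec{m}_0)$ and exhibit the half-turn explicitly. First I would record what the defining conditions of a Bäcklund transform impose on the triple. Since $(b,\vec{l})$ is a Bäcklund transform of $(a,\vec{k})$, the line $a_0 \vee b_0$ is perpendicular to both $\vec{k}_0$ and $\vec{l}_0$, the distance $\dist(a_0,b_0)$ equals the common parameter $d$ of the transform, and the oriented angle $\sphericalangle(\vec{k}_0,\vec{l}_0) = \varphi$ is determined by the twist via $\twist = \sin\varphi/d$. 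The same holds for $(c_0,\vec{m}_0)$ with the \emph{same} $d$ and $\varphi$, because the two transforms have the same twist and (by Theorem~\ref{th:gaussian-curvature}) act on a pseudosphere whose Gaussian curvature fixes $d$ and $\varphi$ simultaneously through \eqref{eq:1}. So $b_0$ and $c_0$ both lie on the circle in the tangent plane $\pi_0$ of $a_0$, centered at $a_0$, of radius $d$, and the normals $\vec{l}_0$, $\vec{m}_0$ make the same angle $\varphi$ with $\vec{k}_0$.

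Next I would set up a convenient coordinate frame at $a_0$: put $a_0$ at the origin, let $\vec{k}_0$ point along one axis, and let $\pi_0$ be the coordinate plane orthogonal to $\vec{k}_0$. Then $b_0 = d(\cos\sigma,\sin\sigma)$ and $c_0 = d(\cos\tau,\sin\tau)$ in $\pi_0$, and the two connecting rays from $a_0$ are $a_0\vee b_0$ and $a_0\vee c_0$. The half-turn $H$ I want is the rotation through $\pi$ about the line $A_0$ through $a_0$ that bisects the angle between these two rays and lies in $\pi_0$ (equivalently, the axis in $\pi_0$ along the direction $(\cos\frac{\sigma+\tau}{2},\sin\frac{\sigma+\tau}{2})$). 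By construction $H$ fixes $a_0$, fixes $\vec{k}_0$ (up to sign — here one must check orientation), and swaps the rays $a_0\vee b_0 \leftrightarrow a_0\vee c_0$, hence swaps $b_0 \leftrightarrow c_0$ since both are at distance $d$. It remains to see $H(\vec{l}_0) = \vec{m}_0$: both $\vec{l}_0$ and $\vec{m}_0$ lie in the plane orthogonal to the respective connecting ray, make angle $\varphi$ with $\vec{k}_0$ measured in the prescribed counter-clockwise sense along that ray, and $H$ maps the plane orthogonal to $a_0\vee b_0$ onto the plane orthogonal to $a_0\vee c_0$; the delicate point is that the half-turn reverses the ray direction, and one must verify that the ``counter-clockwise when viewed along $p_i\to q_i$'' convention of Definition~\ref{def:baecklund-transform} is exactly what makes the orientations match up, so that $\vec{l}_0$ is carried to $\vec{m}_0$ and not to its mirror image. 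Because $H$ is an involution it then also swaps $(c_0,\vec{m}_0)$ back to $(b_0,\vec{l}_0)$.

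The main obstacle is this orientation bookkeeping: everything is symmetric up to reflections, and one has to be careful that the half-turn (an orientation-preserving isometry of $E^3$) genuinely realizes the swap of the \emph{oriented} contact elements, exploiting that the sign convention on $\varphi$ is tied to the direction of $a_0\vee b_0$ and that reversing that direction while simultaneously reflecting $\vec{k}_0$'s ``clock'' leaves the configuration invariant. A clean way to discharge this is to note that the two figures $(a_0,\vec{k}_0,b_0,\vec{l}_0)$ and $(a_0,\vec{k}_0,c_0,\vec{m}_0)$ are congruent rigid figures of equal orientation — they differ only by a Euclidean displacement fixing $a_0$ and the normal line $N_0$ of $a_0$, i.e.\ by a rotation about $N_0$; but such a rotation, composed with the requirement that it swap the two \emph{specific} rays symmetrically, is forced to be the half-turn $H$ above. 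I would phrase the argument in these invariant terms rather than in coordinates, which sidesteps the messy sign chase while still pinning down $H$ uniquely.
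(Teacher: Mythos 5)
There are two genuine gaps. First, your opening reduction---that ``same twist'' together with Theorem~\ref{th:gaussian-curvature} forces $\dist(a_0,b_0)=\dist(a_0,c_0)$ and $\sphericalangle(\vec{k}_0,\vec{l}_0)=\sphericalangle(\vec{k}_0,\vec{m}_0)$ separately---is unjustified: the Gaussian curvature and the twist pin down only the single ratio $\sin\varphi/d$, so the two transforms may a priori realize it with different distances and different angles (for instance $d=1$, $\varphi=\pi/6$ and $d=2$, $\varphi=\pi/2$ give the same twist $1/2$). Ruling this asymmetry out is precisely the hard content of the lemma, and it is what the paper's proof is devoted to: it puts the two normal lines $L_0$, $M_0$ into the symmetric normal form \eqref{eq:41}, places $b_0$ and $c_0$ at signed distances $\beta$, $\gamma$ from the feet of their common perpendicular, parametrizes the admissible positions $a_0(t)$ on the intersection of the two tangent planes, and shows that the equal-twist condition produces the quartic $4(\beta^2-\gamma^2)\tan^2\frac{\varphi}{2}P_1(t)P_2(t)$ whose quadratic factors have non-positive discriminants, so that no \emph{real} $(a_0,\vec{k}_0)$ exists unless $\beta^2=\gamma^2$. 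Your proposal contains no substitute for this reality argument; it assumes the symmetric configuration instead of deriving it.

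Second, even granting your normal form, the half-turn you exhibit is the wrong one. Your axis lies in the tangent plane $\pi_0$ and bisects the rays $a_0\vee b_0$ and $a_0\vee c_0$; such a half-turn $H$ sends $\vec{k}_0$ to $-\vec{k}_0$, and since the angle convention of Definition~\ref{def:baecklund-transform} is equivariant under direct isometries, $H(\vec{l}_0)$ makes the angle $\varphi$ with $-\vec{k}_0$ (equivalently $\varphi+\pi$ with $\vec{k}_0$) when viewed along $a_0\vee c_0$; that vector is $-\vec{m}_0$, not $\vec{m}_0$. So your $H$ carries $(b_0,\vec{l}_0)$ to $(c_0,-\vec{m}_0)$ and reverses the orientation of the contact element---exactly the ``delicate point'' you flag but do not resolve. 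The unique half-turn interchanging the \emph{oriented} normal lines $L_0$ and $M_0$ has its axis through the midpoint of their common perpendicular in the direction $\vec{l}_0+\vec{m}_0$ (the first coordinate axis of \eqref{eq:41}); it does not in general pass through $a_0$ or lie in $\pi_0$, and it swaps $b_0$ and $c_0$ precisely when $\beta=\gamma$. Your closing ``invariant'' argument inherits the same defect: the direct isometry carrying the figure $(a_0,\vec{k}_0,b_0,\vec{l}_0)$ to $(a_0,\vec{k}_0,c_0,\vec{m}_0)$ is a rotation about the normal line of $(a_0,\vec{k}_0)$, which is generically not an involution and therefore does not \emph{interchange} the two contact elements.
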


\begin{figure}
  \centering
  \includegraphics{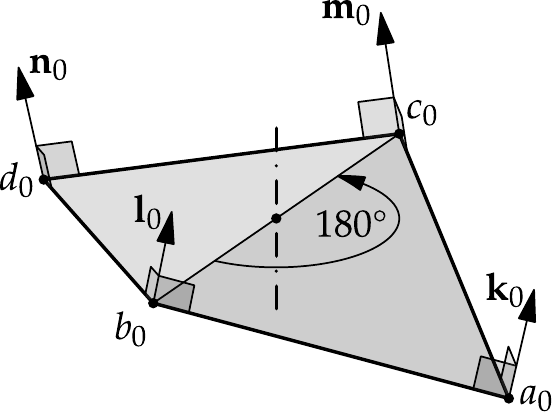}
  \caption{Opposite normals correspond in a half-turn}
  \label{fig:pce-completion}
\end{figure}

\begin{proof}
  In a suitable Euclidean coordinate frame, we have
  \begin{equation}
    \label{eq:41}
    \begin{gathered}
      \vec{l}_0 = (\cos\varphi, \sin\varphi, 0),\quad
      \vec{m}_0 = (\cos\varphi,-\sin\varphi, 0),\\
      b_0 = (0,0, z) + \beta \vec{l}_0,\quad
      c_0 = (0,0,-z) + \gamma \vec{m}_0
    \end{gathered}
  \end{equation}
  with $\varphi \in (0, \frac{\pi}{2})$ and $\beta$, $\gamma \ge
  0$. We try to recover the position of $(a_0,\vec{k}_0)$. It will
  turn out that this is only possible, if $\beta = \gamma$, that is,
  $(b_0,\vec{l}_0)$ and $(c_0,\vec{m}_0)$ correspond in a half-turn.

  Clearly, the possible location of $a_0$ is the intersection line of
  the two tangent planes to $(b_0,\vec{l}_0)$ and
  $(c_0,\vec{m}_0)$. It can be parametrized as
  \begin{equation}
    \label{eq:42}
    a_0(t) = \frac{1}{2\cos\varphi\sin\varphi} (
    (\beta + \gamma)\sin\varphi,
    (\beta - \gamma)\cos\varphi,
    t).
  \end{equation}
  The squared distances form $a_0(t)$ to $b_0$ and $c_0$ equal
  \begin{equation}
    \label{eq:43}
    d_b^2 = (a_0-b_0) \cdot (a_0-b_0),
    \quad
    d_c^2 = (a_0-c_0) \cdot (a_0-c_0),
  \end{equation}
  respectively. (We omit the argument $t$ for sake of readability.)
  Given $a_0$, the unit normal vector $\vec{k}_0$ is obtained as
  $\vec{k}_0 = \vec{k}^\star_0/\Norm{\vec{k}^\star_0}$ where
  \begin{equation}
    \label{eq:44}
    \vec{k}^\star_0 = (a_0 - b_0) \times (a_0 - c_0).
  \end{equation}
  The squared sines of the angles between the contact element normals
  are
  \begin{equation}
    \label{eq:45}
    \sin^2\psi_b = \Norm{\vec{l}_0 \times \vec{k}_0}^2,
    \quad
    \sin^2\psi_c = \Norm{\vec{m}_0 \times \vec{k}_0}^2,
  \end{equation}
  A necessary condition for $a_0(t)$ is now the equality of the ratios
  \begin{equation}
    \label{eq:46}
    \frac{\sin^2\psi_1}{d^2_1} = \frac{\sin^2\psi_3}{d^2_3}
  \end{equation}
  or vanishing of the numerator $P$ of
  \begin{equation}
    \label{eq:47}
    d^2_3\sin^2\psi_1 - d^2_1\sin^2\psi_3.
  \end{equation}
  Generically, $P$ is a polynomial of degree four in $t$. It can be
  factored as
  \begin{equation}
    \label{eq:48}
    P = 4(\beta^2 - \gamma^2)\tan^2\frac{\varphi}{2} P_1(t) P_2(t)
  \end{equation}
  with quadratic polynomials $P_1(t)$ and $P_2(t)$ whose explicit form
  can be readily computed using only rational arithmetic. We are
  rather interested in their discriminants. Using the substitution
  $\varphi = 2\arctan u$ we obtain
  \begin{equation}
    \label{eq:49}
    \begin{gathered}
      \discrim(P_1,t) = -64(1-u^2)^2u^2
      \bigl((\beta - \gamma)(u^4 + 1) + 2(3\gamma+\beta)u^2\bigr)^2,\\
      \discrim(P_3,t) = -64(1-u^2)^2u^2
      \bigl((\beta - \gamma)(u^4 + 1) - 2(3\beta+\gamma)u^2\bigr)^2.
    \end{gathered}
  \end{equation}
  Thus, no real solutions exist for $a_0(t)$ unless $\beta^2 =
  \gamma^2$. This must indeed be the case, since $(a,\vec{k})$ is a
  real solution. Moreover, since $\beta$ and $\gamma$ are not
  negative, we have $\beta = \gamma$ and the half-turn about the first
  coordinate axes indeed interchanges $(b,\vec{m})$ and~$(c,\vec{m})$.
\end{proof}

\begin{lemma}
  \label{lem:completion-uniqueness}
  Consider three contact elements $(a_0,\vec{k}_0)$,
  $(b_0,\vec{l}_0)$, and $(c_0,\vec{m}_0)$ such that $\twist(A_0,B_0)
  = \twist(A_0,C_0)$. Then there exists a unique contact element
  $(d_0,\vec{n}_0)$ such that
  \begin{itemize}
  \item $\dist(b_0,d_0)=\dist(c_0,d_0)=\dist(a_0,b_0)=\dist(a_0,c_0)$,
  \item $\vec{n}_0$ is perpendicular to the vectors that connect $d_0$
    to both, $b_0$ and $c_0$, and
  \item $\twist(B_0,D_0) = \twist(C_0,D_0) = \twist(A_0,B_0) = \twist(A_0,C_0)$.
  \end{itemize}
\end{lemma}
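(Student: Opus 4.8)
The plan is to reduce the configuration to the normal form of the proof of Lemma~\ref{lem:halfturn}, to settle existence by an explicit symmetry argument, and to settle uniqueness by a CAS-aided elimination in the spirit of Theorems~\ref{th:rotation-quadrilateral} and~\ref{th:baecklund-construction}. As in the proof of Lemma~\ref{lem:halfturn}, choose coordinates so that the half-turn $H$ about the first coordinate axis sends $(b_0,\vec{l}_0)$ to $(c_0,\vec{m}_0)$ and back; then \eqref{eq:41} holds with $\beta=\gamma$. The contact element $(a_0,\vec{k}_0)$ lies, by the Bäcklund conditions it satisfies towards $(b_0,\vec{l}_0)$ and $(c_0,\vec{m}_0)$, on the line \eqref{eq:42} of admissible positions, so $a_0=a_0(t_a)$ for some $t_a$; under the standing assumptions we have $\rho:=\dist(a_0,b_0)=\dist(a_0,c_0)$, $\varphi:=\sphericalangle(\vec{k}_0,\vec{l}_0)=\sphericalangle(\vec{k}_0,\vec{m}_0)$, and $\tau:=\twist(A_0,B_0)=\twist(A_0,C_0)=\sin\varphi/\rho$.

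For existence I would take $d_0:=H(a_0)=a_0(-t_a)$ and let $\vec{n}_0$ be the unit vector parallel to $(d_0-b_0)\times(d_0-c_0)$, oriented as prescribed by the counter-clockwise convention of Definition~\ref{def:baecklund-transform} (this turns out to be $\vec{n}_0=-H(\vec{k}_0)$). Since $H$ is an isometry interchanging the two contact elements, all four distances of the first bullet equal $\rho$, and applying $H$ to $\vec{k}_0\perp(a_0-b_0)$ and $\vec{k}_0\perp(a_0-c_0)$ gives the second bullet. For the third bullet one checks, using that $H$ preserves the rotational sense of Definition~\ref{def:twist} while reversing the viewing ray and that reversing $\vec{n}_0$ flips the sign of the twist, that $\twist(B_0,D_0)=\twist(C_0,D_0)=\tau$.

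For uniqueness, let $(d_0',\vec{n}_0')$ satisfy the three conditions. Subtracting the two perpendicularities of the second bullet gives $\vec{n}_0'\perp(b_0-c_0)$, so $\vec{n}_0'$ ranges over a one-parameter family; for each such choice the two distance equalities of the first bullet place $d_0'$ at one of the at most two intersection points of two congruent circles in the plane through $b_0$ with normal $\vec{n}_0'$, so that $d_0'$ depends on the same single parameter. Substituting this parametrization into $\twist(B_0,D_0')=\twist(C_0,D_0')=\tau$ produces a polynomial system in one unknown, which I would reduce with Maple~13. Apart from the spurious factors that, as in the earlier proofs, correspond to isotropic or parallel configurations, this system carries the decoy $(d_0',\vec{n}_0')=(a_0,\vec{k}_0)$ --- it meets the first two bullets but, since the oriented angle in $\twist(B_0,A_0)$ is run through in the opposite sense, has $\twist(B_0,A_0)\neq\tau$ and is killed by the third bullet; after dividing out this factor I expect the construction of the previous paragraph to be the only remaining solution. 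The deeper reason the over-determined system (six scalar conditions on the five degrees of freedom of a contact element) is consistent at all and has an essentially unique solution is exactly the hypothesis $\twist(A_0,B_0)=\twist(A_0,C_0)$.

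The main obstacle is this last elimination: controlling the over-determined polynomial system and checking that, once the decoy $d_0'=a_0$ and the usual degenerate factors are removed, no further honest solution survives. A secondary but genuinely error-prone point is tracking the oriented-angle convention of Definitions~\ref{def:baecklund-transform} and~\ref{def:twist} consistently through $H$ and through the reorientation of $\vec{n}_0$; getting it right is precisely what separates the intended new contact element $(d_0,\vec{n}_0)$ from the coincidence $(a_0,\vec{k}_0)$ and makes the word ``unique'' correct.
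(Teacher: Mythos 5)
Your existence argument coincides with the paper's: the new contact element is the image of $(a_0,\vec{k}_0)$ under the half-turn of Lemma~\ref{lem:halfturn}, with $\vec{n}_0$ taken along $(d_0-b_0)\times(d_0-c_0)$, and the distance, perpendicularity and twist conditions follow by symmetry. The gap is in uniqueness. You parametrize the candidates by a direction $\vec{n}_0'\perp(b_0-c_0)$ plus a binary choice of circle intersection, and then face \emph{two} twist equations in \emph{one} parameter --- an over-determined system whose resolution you defer entirely to an unexecuted elimination (``I would reduce \ldots'', ``I expect \ldots''). The paper removes the over-determinacy by a structural observation you are missing: it confines $d_0$ to the line \eqref{eq:42}, the intersection of the tangent planes of $(b_0,\vec{l}_0)$ and $(c_0,\vec{m}_0)$ (so it additionally uses $\vec{l}_0\perp(d_0-b_0)$ and $\vec{m}_0\perp(d_0-c_0)$, implicit in the Bäcklund/Bennett setup), and then reuses the computation of Lemma~\ref{lem:halfturn} with $\beta=\gamma$: the polynomial $P$ of \eqref{eq:48} vanishes identically, so $\twist(B_0,D_0)=\twist(C_0,D_0)$ holds automatically for \emph{every} point of that line. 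Hence only one scalar twist condition survives; its square is quadratic in $t$, one root is the known value $t=t_a$ giving $(a_0,\vec{k}_0)$, so exactly one further root remains, and the half-turn image realizes it with the correct sign. Without this reduction your system has no a priori bound on its solution set and the elimination is not carried out, so uniqueness is not established.

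A secondary discrepancy: you argue that the coincidence $(d_0',\vec{n}_0')=(a_0,\vec{k}_0)$ is excluded by the third bullet through a sign reversal of the twist. The paper does not do this; it keeps $(a_0,\vec{k}_0)$ as a legitimate root of the \emph{squared} twist equation and discards it only as the already known solution (``apart from $(a_0,\vec{k}_0)$''), using the half-turn symmetry to certify the sign of the genuinely new root. Your sign claim may be defensible under the orientation convention of Definition~\ref{def:twist}, but it is exactly the kind of convention-chasing you yourself flag as error-prone, and the count-the-roots-of-a-quadratic argument makes it unnecessary.
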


\begin{proof}
  For the time being, we ignore the twist conditions. By the same
  calculation as in the proof of Lemma~\ref{lem:halfturn} (but with
  $\beta = \gamma$) we get distance and angle conditions that
  determine the positions of the contact element $(d_0,\vec{n}_0)$. In
  fact, the possible locus of points $d_0$ is a straight line,
  parametrized by a vector function $d_0(t)$ as in \eqref{eq:42}. The
  unit normal vector $\vec{n}_0(t)$ can be found as in
  \eqref{eq:44}. For all contact elements $(d_0(t), \vec{n}_0(t))$ the
  twist with respect to $(b_0,\vec{l}_0)$ and $(c_0,\vec{m}_0)$ is the
  same.

  Thus, only one twist condition, say $\twist(B_0,D_0) =
  \twist(A_0,B_0)$, is relevant. Its square turns out to be quadratic
  in $t$. Thus, there is exactly one solution to $\twist(B_0,D_0) =
  \pm\twist(A_0,B_0)$ apart from $(a_0,\vec{k}_0)$. It is indeed a
  solution with equal sign, since it can be obtained by rotating
  $(a_0,\vec{k}_0)$ through {180\textdegree} about the half-turn axis
  that interchanges $(b_0,\vec{l}_0)$ and $(c_0,\vec{m}_0)$.
\end{proof}

Lemma~\ref{lem:completion-uniqueness} allows an unambiguous contact
element wise construction of the contact element net $(d,\vec{n})$. It
remains to be shown that $(d,\vec{n})$ is indeed a principal contact
element net. Our proof makes use of a remarkable relation between the
Bäcklund transform and the Bennett linkage that was already noticed by
Wunderlich in \cite{wunderlich51:_differenzengeometrie}. The original
references to the Bennett linkage are
\cite{bennett03:_a_new_mechanism,%
  bennett14:_skew_isogramm_mechanism}. A more accessible description
of its geometry is
\cite[Section~10.5]{hunt78:_kinematic_geometry_mechanisms}.


The Bennett linkage is a spatial four-bar mechanism with a
one-parametric mobility. It consists of four skew revolute axes $K_0$,
$L_0$, $N_0$, $M_0$ such that the normal feet on one axes to the two
neighbouring axes coincide. Denote these points by $a_0$, $b_0$,
$d_0$, and $c_0$, respectively. In order to be mobile, the mechanism
has to fulfill the constraints
\begin{itemize}
\item $\dist(a_0,b_0) = \dist(c_0,d_0)$, $\dist(a_0,c_0) =
  \dist(b_0,d_0)$,
\item $\twist(A_0,B_0) = \twist(C_0,D_0)$, $\twist(A_0,C_0) = \twist(B_0,D_0)$
\end{itemize}
(see for example
\cite[Section~10.5]{hunt78:_kinematic_geometry_mechanisms}). As
suggested by our notation, corresponding normal lines in
Theorem~\ref{th:bianchi-permutation} form the axes of a special
Bennett linkage where all four twists are not only equal in pairs but
equal as a whole. Thus, we may refer the reader to
Figure~\ref{fig:pce-completion} for an illustration of a Bennett
linkage.

A Bennett linkage allows infinitely many incongruent realizations in
space that exhibit the same relative positions, characterized by
normal distance and angle, between any two adjacent axes. If one link
is kept fix, the configuration space of the opposite link has the
topology the projective line (it can be described as a conic on the
Study quadric, see
\cite{husty09:_algebraic_geometry_kinematics}). Hence, any two
realizations of Bennett's linkage can be continuously transformed into
each other without changing the relative position of neighboring
axes. This observation is a key ingredient in the proof of
Theorem~\ref{th:bianchi-permutation}.

As the final preparatory step for the proof of
Theorem~\ref{th:bianchi-permutation}, we mention an obvious
alternative to the characterization of principal contact element nets
in Definition~\ref{def:pcen}. Observe that two contact elements
$(a_0,\vec{k}_0)$ and $(b_0,\vec{l}_0)$ in general position correspond
in a unique rotation. The rotation axis is found as line of
intersection of two bisector planes of corresponding points (for
example of $a_0$, $b_0$ and $a_0+\vec{k}_0$, $b_0+\vec{l}_0$). Two
rotations exist if and only if the bisector planes of two
corresponding points coincide. In this case the bisector planes of all
corresponding points coincide and $(a_0,\vec{k}_0)$ can be rotated in
infinitely many ways to $(b_0,\vec{l}_0)$. The rotation axes lie in
the common bisector plane of corresponding points and are incident
with $K_0 \cap L_0$. This leads to

\begin{proposition}
  \label{prop:pcen}
  A contact element net $(a,\vec{k})$ is a principal contact element
  net if and only if any two neighbouring contact elements
  $(a_i,\vec{k}_i)$ and $(a_j,\vec{k}_j)$ correspond in two (and hence
  in infinitely many) rotations. The rotation axes are incident with
  the intersection point $K_i \cap K_j$ of the two normals and lie in
  the unique bisector plane of $(a_i,\vec{k}_i)$ and
  $(a_j,\vec{k}_j)$.
\end{proposition}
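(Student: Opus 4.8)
The plan is to deduce the equivalence from a single elementary fact about the displacements that carry one contact element onto a neighbouring one, and then to recognise the ``two rotations'' alternative as a restatement of the common oriented tangent sphere condition of Definition~\ref{def:pcen}.

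First I would describe the set of direct Euclidean displacements that map $(a_i,\vec{k}_i)$ to $(a_j,\vec{k}_j)$. Fixing one such displacement $R$, every other one is $R\circ\rho_\varphi$ where $\rho_\varphi$ fixes $(a_i,\vec{k}_i)$; since a direct displacement with a fixed point is a rotation and one that in addition fixes a unit vector rotates about the line through that point in the direction of the vector, the $\rho_\varphi$ are precisely the rotations about the normal line $K_i$. I would then decide which members $R\circ\rho_\varphi$ are themselves rotations, i.e.\ have a fixed point. Writing $R$ as a rotation about an axis $L$ and using that a composition of two rotations (with non-cancelling angles) is again a rotation exactly when the axes are coplanar, $R\circ\rho_\varphi$ with $\rho_\varphi\ne\identity$ is a rotation if and only if $L$ and $K_i$ are coplanar --- a condition independent of $\varphi$. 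Hence the displacement set contains either one rotation or a one-parameter family of them, which is the ``two (and hence in infinitely many)'' dichotomy.

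Supposing infinitely many rotations exist, I would read off the geometry. For a rotation $R$ of the family with axis $L$ the lines $L$ and $K_i$ are coplanar; in the generic, non-parallel case they meet, and their common point is fixed by $R$ and hence lies on $R(K_i)=K_j$, so it equals $K_i\cap K_j$. Thus the two normals of a neighbouring pair meet in a point $h_{ij}=K_i\cap K_j$, each rotation axis lies in the perpendicular bisector plane $\beta_{ij}$ of $a_i$ and $a_j$ (its points are fixed while $R(a_i)=a_j$), and each axis passes through $K_i\cap\beta_{ij}=K_j\cap\beta_{ij}=h_{ij}$. Conversely, if the normals meet in a point equidistant from $a_i$ and $a_j$ with consistently oriented unit normals, then reflection in $\beta_{ij}$ already carries $(a_i,\vec{k}_i)$ to $(a_j,\vec{k}_j)$, and post-composing it with the reflection in an arbitrary plane through $K_j$ produces a rotation about a line of $\beta_{ij}$ through $h_{ij}$; so infinitely many rotations exist.

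It remains to connect this with Definition~\ref{def:pcen}. I would show that $(a_i,\vec{k}_i)$ and $(a_j,\vec{k}_j)$ have a common oriented tangent sphere if and only if their normals meet in a point at equal oriented distance from the two vertices: the centre of such a sphere must lie on both normals and at equal oriented distance from $a_i$ and $a_j$, and conversely the sphere centred at $h_{ij}$ through $a_i$ serves. Combining the three steps then yields the asserted equivalence as well as the description of the rotation axes. The step I expect to be the main obstacle is the orientation bookkeeping --- arranging signs of distances and senses of the unit normals so that the reflection in $\beta_{ij}$ reproduces $\vec{k}_j$ and not $-\vec{k}_j$, and verifying that the degenerate subcases (parallel normals, or $L\parallel K_i$ where a single value of $\varphi$ yields a translation) do not undermine the ``infinitely many'' conclusion.
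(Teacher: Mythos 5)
Your proposal is correct, and it reaches the dichotomy by a genuinely different mechanism than the paper. The paper's argument (given in the paragraph preceding the proposition) observes that the axis of any rotation carrying $(a_i,\vec{k}_i)$ to $(a_j,\vec{k}_j)$ must lie in the bisector plane of \emph{every} pair of corresponding points; generically two such planes (e.g.\ those of $a_i,a_j$ and of $a_i+\vec{k}_i$, $a_j+\vec{k}_j$) intersect in a line, forcing uniqueness, and a second rotation exists precisely when these bisector planes coincide, in which case they all coincide and one gets the full pencil of axes through $K_i\cap K_j$ in the common plane. You instead parametrize the whole coset $R\circ\rho_\varphi$ of displacements and invoke the coplanarity criterion for the composition of two rotations to be a rotation; this buys the ``one or infinitely many'' alternative as a statement independent of $\varphi$, at the cost of having to extract the axis locations afterwards (which you do correctly: coplanarity with $K_i$ plus containment in $\beta_{ij}$ pins each axis to the pencil through $h_{ij}$). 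Your forward direction --- reflection in $\beta_{ij}$ followed by reflection in an arbitrary plane through $K_j$ --- is in fact exactly the decomposition \eqref{eq:19} the paper uses in Section~\ref{sec:construction}, so that half coincides with the paper's machinery. Two small points to tidy up: your composition argument tacitly presupposes that at least one rotation $R$ exists in the coset (harmless, since the ``only if'' direction is handed two rotations and the ``if'' direction constructs them explicitly, but worth saying); and the identification of the common oriented tangent sphere requires noting that the fixed point $h_{ij}=a_i+t\vec{k}_i$ satisfies $h_{ij}=a_j+t\vec{k}_j$ with the \emph{same} parameter $t$, which is what makes the tangency oriented --- this is the orientation bookkeeping you flag, and it does go through.
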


\begin{figure}
  \centering
  \includegraphics{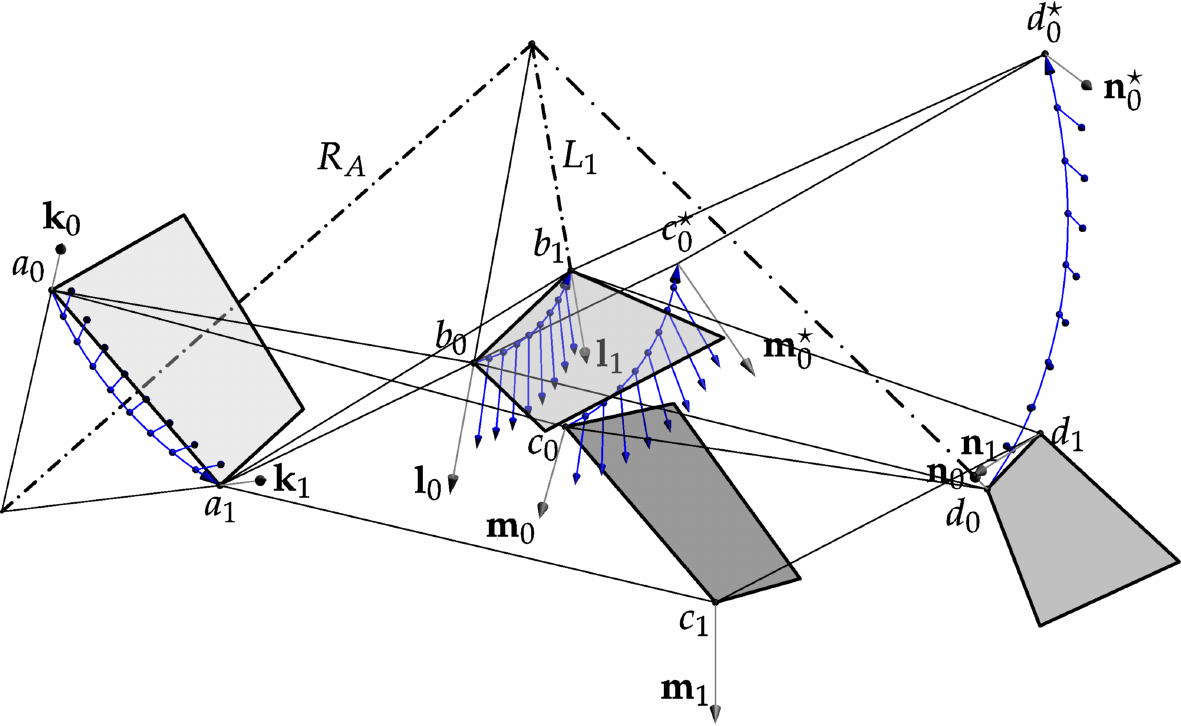}
  \includegraphics{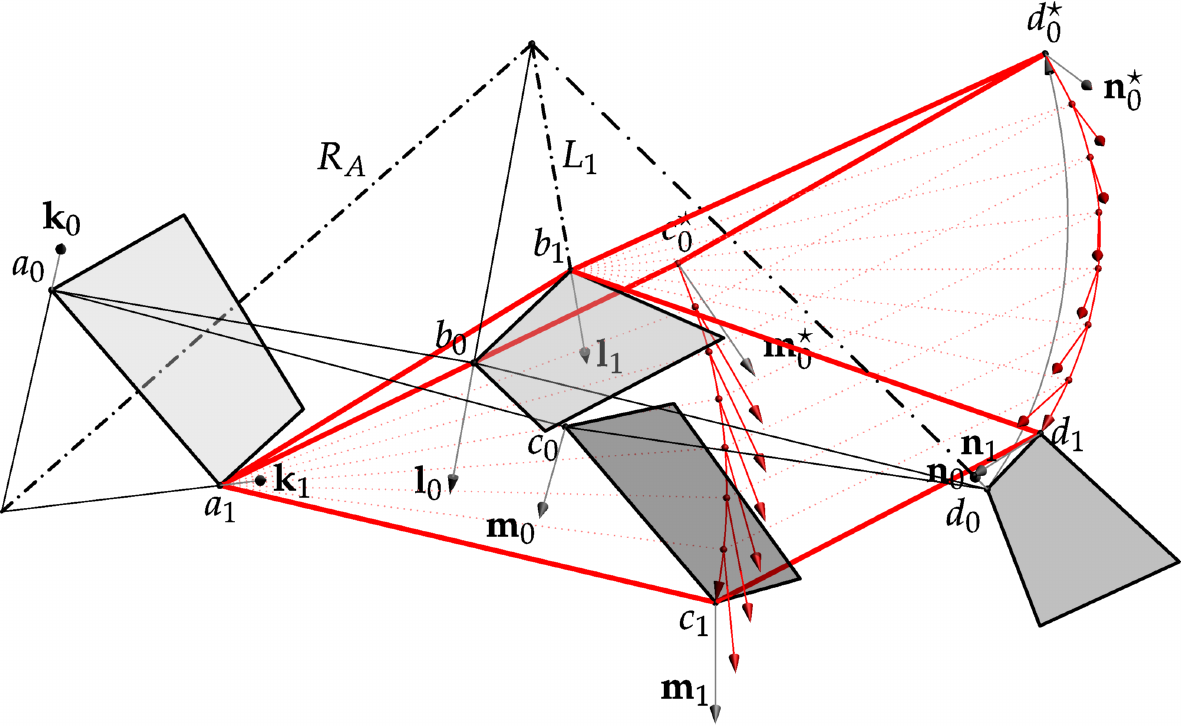}
  \caption{Rotations about $R_A$ (top) and about $L_1$ (bottom) in the
    proof of Theorem~\ref{th:bianchi-permutation}}
  \label{fig:bianchi-rotations}
\end{figure}

\begin{proof}[Proof of Theorem~\ref{th:bianchi-permutation}]
  We consider an elementary quadrilateral $(a_0,\vec{k}_0)$, \ldots,
  $(a_3,\vec{k}_3)$ of the principal contact element net $(a,\vec{k})$
  and the corresponding quadrilaterals $(b_0,\vec{l}_0)$, \ldots,
  $(b_3,\vec{l}_3)$, and $(c_0,\vec{m}_0)$, \ldots, $(c_3,\vec{m}_3)$
  under the two Bäcklund transforms. By
  Lemma~\ref{lem:completion-uniqueness} there exist uniquely
  determined contact elements $(d_0,\vec{n}_0)$, \ldots,
  $(d_3,\vec{n}_3)$ that satisfy all distance and angle constraints
  imposed by the Bäcklund transform. We have to show that these
  contact elements form the elementary quadrilateral of a principal
  contact element net. In view of Proposition~\ref{prop:pcen} it is
  sufficient to show that one neighbouring pair, say $(d_0,\vec{n}_0)$
  and $(d_1,\vec{n}_1)$, corresponds in two different rotations.

  As already argued earlier, there exists a rotation $R$ that maps
  $(a_0,\vec{k}_0)$ to $(a_1,\vec{k}_1)$ and, at the same time,
  $(b_0,\vec{l}_0)$ to $(b_1,\vec{l}_1)$. Its axis $A_R$ is spanned by
  $K_0 \cap K_1$ and $L_0 \cap L_1$
  (Figure~\ref{fig:bianchi-rotations}, top).

  The rotation $R$ transforms $(c_0,\vec{m}_0)$ to a contact element
  $(c^\star_1,\vec{m}^\star_1)$ and $(d_0,\vec{n}_0)$ to a contact
  element $(d^\star_1,\vec{n}^\star_1)$. Since $K_1$, $L_1$,
  $N^\star_0$, $M^\star_0$ and $K_1$, $L_1$, $N_1$, $M_1$ are just two
  realizations of the axes of the same Bennett linkage (with
  respective normal feet $a_1$, $b_1$, $d^\star_0$, $c^\star_0$ and
  $a_1$, $b_1$, $d_1$, $c_1$), we can rotate
  $(d^\star_1,\vec{n}^\star_1)$ about $K_1$ into
  $(d_1,\vec{n}_1)$. (Via Bennett's linkage, this rotation can be
  coupled with a rotation of $c^\star_0$ to $c_1$ about the axis
  $K_1$\,---\,but this is not relevant at this point of our proof.)
  The composition of $R$ with this last rotation
  (Figure~\ref{fig:bianchi-rotations}, bottom), call it $Q$, is again
  a rotation $S$ because the axes $R_A$ and $L_1$ of $R$ and $Q$
  intersect. Thus, we have found one rotation that transforms
  $(d_0,\vec{n}_0)$ to $(d_1,\vec{n}_1)$. If we start with the
  rotation that maps $(a_0,\vec{k}_0)$ to $(a_1,\vec{k}_1)$ and
  $(c_0,\vec{m}_0)$ to $(c_1,\vec{m}_1)$ we obtain a second rotation
  $T$ of $(d_0,\vec{n}_0)$ to $(d_1,\vec{n}_1)$. Generically, the two
  rotations $S$ and $T$ are different, because $L_0$ and $M_0$ do not
  intersect. We conclude that the contact elements $(d_0,\vec{n}_0)$
  and $(d_1,\vec{n}_1)$ satisfy the principal contact element
  criterion of Proposition~\ref{prop:pcen}.
\end{proof}

\section{Conclusion}
\label{sec:conclusion}

In this article we gave a fairly complete geometric treatment of the
Bäcklund transform of principal contact element nets. We proved the
most relevant results (Theorems~\ref{th:gaussian-curvature},
\ref{th:baecklund-construction}, and \ref{th:bianchi-permutation})
which are typical of this curious surface relation. Moreover, we
consider the disclosed relations to discrete kinematics and in
particular to discrete rotating motions to be of interest. In this
context we would emphasize Theorem~\ref{th:rotation-quadrilateral},
which already brings us to open issues. We were unable to provide
certain proofs (those of Theorems \ref{th:rotation-quadrilateral} and
\ref{th:baecklund-construction}) and in a way that make the involved
calculations manually tractable. There is still room left for
simplifications. Moreover, an analytic complement to our geometric
reasoning, maybe in the style of \cite{schief03:_unification}, would
be desirable.



%



\end{document}